\def\L{\mathcal{L}}
\def\E{E}
\def\<{\langle}
\def\>{\rangle}
\def\B{\mathcal{B}}
\def\H{\mathcal{H}}
\def\Dr{D}
\def\D{\mathcal{D}}
\def\L{\mathcal{L}}
\def\g{\gamma}
\def\N{\mathbb{N}}
\def\O{\Omega}
\def\R{\mathbb{R}}
\def\A{\mathcal{A}}
\def\HH{\mathcal{H}}
\newcommand{\be}{\begin{equation}}
\newcommand{\ee}{\end{equation}}
\newcommand{\bib}[4]{\bibitem{#1}{\sc#2: }{\it#3. }{#4.}}
\newcommand{\cp}{\mathop{\rm cap}\nolimits}
\numberwithin{equation}{section}
\theoremstyle{plain}
\newtheorem{teo}{Theorem}[section]
\newtheorem{prop}[teo]{Proposition}
\newtheorem{deff}[teo]{Definition}
\theoremstyle{remark}
\newtheorem{oss}[teo]{Remark}
\newtheorem{exam}[teo]{Example}
\theoremstyle{figure}
\newenvironment{ack}{{\bf Acknowledgements.}}
\title{Some New Problems in Spectral Optimization}
\author{Giuseppe Buttazzo, Bozhidar Velichkov}
\begin{document}
\maketitle

\begin{abstract}
We present some new problems in spectral optimization. The first one consists in determining the best domain for the Dirichlet energy (or for the first eigenvalue) of the {\it metric Laplacian}, and we consider in particular Riemannian or Finsler manifolds, Carnot-Carath\'eodory spaces, Gaussian spaces. The second one deals with the optimal shape of a graph when the minimization cost is of spectral type. The third one is the optimization problem for a Schr\"odinger potential in suitable classes.
\end{abstract}

\textbf{Keywords:} shape optimization, eigenvalues, Sobolev spaces, metric spaces, optimal graphs, optimal potentials.

\textbf{2010 Mathematics Subject Classification:} 49J45, 49R05, 35P15, 47A75, 35J25.

\section{Introduction}\label{sintro}

Spectral optimization theory goes back to 1877, when Lord Raileigh conjectured, in his book {\it``The Theory of Sound''} \cite{ray77}, that among all drums of prescribed area the circular one had the lowest sound. Here are his precise words:

{\it``If the area of a membrane be given, there must evidently be some form of boundary for which the pitch (of the principal tone) is the gravest possible, and this form can be no other than the circle\dots''}

Since then, many other optimization problems involving the spectrum of the Laplace operator have been considered (see for instance the survey paper \cite{buremc} and the books \cite{book}, \cite{hen06}, \cite{pihe05}), showing the existence of optimal shapes and their qualitative properties together with the corresponding necessary conditions of optimality. However, in spite of the strong development of the theory, many problems still remain open and many conjectures are still waiting for a proof.

In this paper we present some different directions of research; our goal is to consider spectral optimization issues for the following three classes of problems.

\begin{itemize}

\item Optimization with respect to the domain for functionals like the Dirichlet energy or the first Dirichlet eigenvalue related to the {\it metric Laplacian}. This operator is in general non-linear and acts on functions defined on a general metric space; of particular interest are the cases when the metric space consists in a Riemannian or Finsler manifold, in a Carnot-Carath\'eodory space, in a Gaussian space.

\item Optimization of the shape of a {\it graph} with respect to the Dirichlet energy or to the first eigenvalue. In this case some explicit examples can be provided, together with some general necessary conditions of optimality.

\item Optimization of the potential $V(x)$ in a Schr\"odinger equation of the form $-\Delta u+V(x)u=f(x)$. The potential will be submitted to some suitable integral constraints and an existence result will be provided for several cost functionals.

\end{itemize}

The three cases above will be treated in Sections \ref{s2}, \ref{s3} and \ref{s4}, respectively. In all the cases Dirichlet boundary conditions will be considered; other kinds of boundary conditions would require completely different mathematical tools that in many cases are only partially developed. Our main concern is addressed to the existence of optimal solutions; other very interesting questions, like for instance the regularity of optimal solutions, have at present only limited and partial answers. In all the three cases, the existence of an optimal domain is obtained through the direct methods of the calculus of variations, that require two main ingredients: compactness of the space of competitors and semi-continuity of the cost functional. In the literature (see for instance \cite{book}) some useful topologies on the family of admissible domains have been introduced, in order to provide the necessary compactness properties. The semi-continuity of the cost functional is a more involved issue and requires some careful analysis.

The purpose of the present paper is not to provide new proofs or new results but mainly to illustrate the field of spectral optimization problems through some examples and to discuss some crucial issues by proposing some interesting problems that, to the best of our knowledge, are still open.

In Section \ref{s2} we consider the general framework of metric spaces, on which the {\it metric Laplacian} operator can be defined, together with the related energy and spectral eigenvalues. We recall a general existence result of an optimal domain, obtained in \cite{buve12}, and we show some related examples concerning Riemannian or Finsler manifolds, Carnot-Carath\'eodory spaces, Gaussian spaces.

In Section \ref{s3} we consider the case of spectral optimization problems for graphs, and in some cases we are able to provide explicitly the optimal shapes. We consider a natural convergence on the set of metric graphs in terms of the connectivity matrices of the graphs and the lengths of the edges. It is not hard to check that the spectral functionals we consider are continuous with respect to this convergence. On the other hand the family of admissible graphs endowed with such a convergence is not even complete, which gives raise to some counterexamples to the existence. Thus, we investigate the problem in a wider, more appropriate class of competitors.

In the last Section \ref{s4} we consider potentials for Schr\"odinger equations and the related optimization problems. In this case the admissible set of choices is just $L^1_+(\O)$, the set of positive integrable functions on $\O$, and the constraints are given by some integral inequalities. In this case, both the compactness of the optimizing sequences and the semi-continuity of the cost functional are quite involved questions, and the existence of optimal potentials is only known in some particular cases, leaving several interesting problems still open.

\section{Spectral optimization in metric spaces}\label{s2}

In this section we consider spectral optimization problems in the class of subsets of some ambient metric space $(X,d)$ endowed with a finite Borel measure $m$. We do not assume any compactness or boundedness of $X$ with respect to the distance $d$. Our main assumption is the compactness of the inclusion $L^2(m)\subset H^1(X,m)$, where $H^1(X,m)$ is a Sobolev space of functions on $(X,m)$, which we define in each of the cases we consider.

\subsection{Metric measure spaces}

In \cite{buve12} we consider a separable metric space $(X,d)$ endowed with a finite Borel measure $m$ and a Riesz subspace $H$ of $L^2(m)$ satisfying the Stone property, i.e.
$$\hbox{if }u\in H,\quad\hbox{then }u\wedge1\in H\hbox{ and }|u|\in H.$$
Let $D:H\to L^2_+(m)$ be a convex, $1$-homogeneous map which is also local, i.e.
$$D(u\vee v)=Du\cdot I_{\{u>v\}}+Dv\cdot I_{\{u\le v\}},\quad\forall u,v\in H.$$
We consider $H$ endowed with the norm 
$$\|u\|_H=\left(\|u\|^2_{L^2}+\|Du\|^2_{L^2}\right)^{1/2}.$$
Moreover, we assume that
\begin{enumerate}[($\HH$1)]
\item the inclusion $i:H\hookrightarrow L^2$ is compact;
\item the norm of the gradient is lower semi-continuous with respect to the $L^2$ convergence, i.e. for each sequence $u_n$ bounded in $H$ and convergent in the strong $L^2$ norm to a function $u\in L^2(m)$, we have that $u\in H$ and
$$\int_X|Du|^2\,dm\le\liminf_{n\to\infty}\int_X|Du_n|^2\,dm;$$
\item the linear subspace $H\cap C(X)$, where $C(X)$ denotes the set of real continuous functions on $X$, is dense in $H$ with respect to the norm $\|\cdot\|_H$.
\end{enumerate}

An interesting example of subspace $H$ with the properties above is given by the Sobolev space $H^1(X,m)$ in the sense of Cheeger \cite{cheeger}.

For any set $\O\subset X$, we define the space
$$H_0(\O)=\big\{u\in H\ :\ \cp(\{u\ne0\}\setminus\O)=0\big\},$$
where the capacity $\cp(E)$ of a generic set $E\subset X$, is defined by
$$\cp(E)=\inf\big\{\|u\|^2_H\ :\ u\in H,\ u\ge0\hbox{ on }X,\ u\ge1\hbox{ in a neighbourhood of }E\big\}.$$

\begin{deff}\label{lb}
For each Borel set $\O$ and each $k\ge1$, we define 
\be\label{lbk}
\lambda_k(\O)=\inf_{K\subset H_0(\O)}\sup\Big\{\int_\O|Du|^2\,dm\ :\ u\in K,\ \int_\O u^2\,dm=1\Big\},
\ee
where the infimum is over all $k$-dimensional linear subspaces $K$ of $H_0(\O)$.
\end{deff}

\begin{deff}\label{en}
For each Borel set $\O$ and each $f\in L^2(\O,m)$, the Dirichlet energy of $\O$ is defined as
\be\label{eneq}
E_f(\O)=\inf\Big\{\frac{1}{2}\int_\O|Du|^2\,dm+\frac{1}{2}\int_\O u^2\,dm-\int_\O uf\,dm\ :\ u\in H_0(\O)\Big\}.
\ee
\end{deff}

\begin{oss}
In the cases when we have the inequality $\|u\|_{L^2(m)}\le C\|Du\|_{L^2(m)}$, for each $u\in H$, it is more convenient to define the energy $E_f(\O)$ as
\be\label{eneq1}
E_f(\O)=\inf\Big\{\frac{1}{2}\int_\O|Du|^2\,dm-\int_\O uf\,dm\ :\ u\in H_0(\O)\Big\}.
\ee
Also in this case the statement of the following theorem remains valid. 
\end{oss}

\begin{teo}\label{main}
Suppose that $(X,d)$ is a separable metric space with a finite Borel measure $m$ and suppose that $H\subset L^2(X,m)$ and $D:H\rightarrow L^2(X,m)$ are as above. Then the shape optimization problems 
$$\min\big\{E_f(\O)\ :\ \O\subset X,\ m(\O)\le1\big\},$$
and
$$\min\big\{\lambda_k(\O)\ :\ \O\subset X,\ m(\O)\le1\big\},$$
have solutions, which are quasi-open sets, i.e. level sets of the form $\{u>0\}$ for some function $u\in H$.
\end{teo}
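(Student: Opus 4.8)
The plan is to apply the direct method of the calculus of variations, using that each of the two costs depends on $\O$ only through the space $H_0(\O)$ and is monotone non-increasing in it, so that an optimal set will be recovered as the region where the limit of a minimizing sequence of functions is nonzero. I first record three facts used repeatedly. (i) If $\cp(E)=0$ then $m(E)=0$: any competitor $u$ in the definition of $\cp(E)$ with $\|u\|_H<\eps$ has $m(E)\le m(\{u\ge1\})\le\|u\|_{L^2(m)}^2<\eps^2$. (ii) Since $D$ is $1$-homogeneous one has $D0=0$, and locality applied to $u$ and the constant $0$ (comparing $D(u\vee 0)$ with $D(0\vee u)$) gives $Du=0$ $m$-a.e. on $\{u=0\}$; hence for $u\in H_0(\O)$ the integrals over $\O$ in \eqref{lbk} and \eqref{eneq} coincide with those over $X$, so that the functional in \eqref{eneq} and the Rayleigh quotient in \eqref{lbk} may be computed with integrals over $X$ when restricted to $H_0(\O)$. (iii) Since $m$ is finite, Fatou's lemma for indicators gives $m(\liminf_n A_n)\le\liminf_n m(A_n)$, and as $L^2(m)$ convergence implies $m$-a.e. convergence along a subsequence, $v_n\to v$ in $L^2(m)$ forces $m(\{v\ne0\})\le\liminf_n m(\{v_n\ne0\})$. (Assumption ($\HH$3) is not needed below; it underlies, in the background, the consistency of the capacitary framework.)

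\emph{The energy problem.} Let $(\O_n)$ be a minimizing sequence with $m(\O_n)\le1$, and pick $u_n\in H_0(\O_n)$ with $\tfrac12\int_X|Du_n|^2\,dm+\tfrac12\int_X u_n^2\,dm-\int_X u_nf\,dm\le E_f(\O_n)+1/n$, which is legitimate by (ii). By Young's inequality this quantity is $\ge\tfrac14\|u_n\|_H^2-\|f\|_{L^2(m)}^2$, so $(u_n)$ is bounded in $H$; by ($\HH$1) a subsequence converges in $L^2(m)$ to some $u$, so $\int_X u_n^2\,dm\to\int_X u^2\,dm$ and $\int_X u_nf\,dm\to\int_X uf\,dm$, while by ($\HH$2) $u\in H$ and $\int_X|Du|^2\,dm\le\liminf_n\int_X|Du_n|^2\,dm$. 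Hence $\tfrac12\int_X|Du|^2\,dm+\tfrac12\int_X u^2\,dm-\int_X uf\,dm\le\inf_\O E_f(\O)$. Put $\O:=\{u\ne0\}=\{|u|>0\}$, a quasi-open set since $|u|\in H$. By (i), $m(\{u_n\ne0\})\le m(\O_n)\le1$, so by (iii) $m(\O)\le1$; and since $u\in H_0(\O)$, the definition of $E_f(\O)$ gives $E_f(\O)\le\tfrac12\int_X|Du|^2\,dm+\tfrac12\int_X u^2\,dm-\int_X uf\,dm\le\inf_\O E_f(\O)$, so $\O$ is optimal. (When the Poincar\'e inequality holds and \eqref{eneq1} is used instead, the same argument works, coercivity coming from $\int_X|Du|^2\,dm$ alone.)

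\emph{The eigenvalue problem.} The additional difficulty is that $v\mapsto\int|Dv|^2\,dm$ need not be a quadratic form (already in the Finsler case), so no spectral decomposition is at hand; I would argue with a fixed $L^2(m)$-orthonormal basis. Let $(\O_n)$ be minimizing, $\lambda:=\lim_n\lambda_k(\O_n)$, and choose $k$-dimensional $K_n\subset H_0(\O_n)$ with $\sup\{\int_X|Dv|^2\,dm:v\in K_n,\ \|v\|_{L^2(m)}=1\}\le\lambda_k(\O_n)+1/n$, together with an $L^2(m)$-orthonormal basis $e_n^1,\dots,e_n^k$ of $K_n$. By $1$-homogeneity every $v\in K_n$ satisfies $\int_X|Dv|^2\,dm\le(\lambda_k(\O_n)+1/n)\|v\|_{L^2(m)}^2$, so each $e_n^i$ is bounded in $H$; by ($\HH$1), along a subsequence $e_n^i\to e^i$ in $L^2(m)$ for every $i$, and by ($\HH$2) each $e^i\in H$. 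Letting $n\to\infty$ in $\int_X e_n^ie_n^j\,dm=\delta_{ij}$, the space $K:=\mathrm{span}\{e^1,\dots,e^k\}$ is $k$-dimensional. If $v=\sum_i c_ie^i\in K$ with $\|v\|_{L^2(m)}=1$, then $v_n:=\sum_i c_ie_n^i\in K_n$ converges to $v$ in $L^2(m)$, is bounded in $H$, has $\|v_n\|_{L^2(m)}\to1$, and $\int_X|Dv_n|^2\,dm\le(\lambda_k(\O_n)+1/n)\|v_n\|_{L^2(m)}^2$; by ($\HH$2), $\int_X|Dv|^2\,dm\le\liminf_n\int_X|Dv_n|^2\,dm\le\lambda$. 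Put $\O:=\{\sum_i|e^i|>0\}$, quasi-open since $\sum_i|e^i|\in H$. As $\cp$ is subadditive, $\cp(\bigcup_i\{e_n^i\ne0\}\setminus\O_n)=0$, so by (i) $m(\{\sum_i|e_n^i|>0\})\le m(\O_n)\le1$, and since $\sum_i|e_n^i|\to\sum_i|e^i|$ in $L^2(m)$, (iii) gives $m(\O)\le1$. Finally $K\subset H_0(\O)$, so $\lambda_k(\O)\le\sup\{\int_X|Dv|^2\,dm:v\in K,\ \|v\|_{L^2(m)}=1\}\le\lambda$, and $\O$ is optimal.

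\emph{Main obstacle.} I expect the only genuinely delicate point to be the passage to the limit of the measure constraint, i.e. the lower semicontinuity of $\O\mapsto m(\O)$ along the nonvanishing sets of an $L^2$-convergent sequence (fact (iii)): this is exactly what makes $m(\O)\le1$ the natural constraint for the direct method, and it relies on $m$ being finite together with $\cp$-null $\Rightarrow$ $m$-null. For a constraint of a different nature (a perimeter or diameter bound, say) this step fails, and one must instead establish compactness of $\{H_0(\O_n)\}$ in a suitable sense, such as $\gamma$-convergence. The secondary point is the nonlinearity of $\int|Dv|^2\,dm$ in the eigenvalue problem, which rules out eigenfunction arguments and is handled, as above, by the fixed-orthonormal-basis construction and $1$-homogeneity.
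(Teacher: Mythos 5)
Your argument is correct. Note, however, that the paper itself contains no proof of Theorem \ref{main}: it is recalled from \cite{buve12}, where the existence is obtained through a general framework --- compactness of the class of admissible domains with respect to a (weak) $\gamma$-convergence defined via the energy functions $w_\O$, plus lower semicontinuity and monotonicity of the cost, exactly the two hypotheses isolated in Remark \ref{general}. Your route is different and more elementary: you run the direct method on the \emph{state functions} rather than on the domains, extract an $L^2$-convergent sequence of near-minimizers (resp.\ of orthonormal bases of near-optimal $k$-dimensional subspaces), and read off the optimal domain as the nonvanishing set of the limit. The three preliminary facts are all correctly justified (in particular the derivation of $Du=0$ a.e.\ on $\{u=0\}$ from locality and $1$-homogeneity, which is what lets you replace $\int_\O$ by $\int_X$ and makes $E_f$ and $\lambda_k$ genuinely monotone in $H_0(\O)$), the orthonormal-basis device correctly handles the fact that $v\mapsto\int|Dv|^2\,dm$ need not be quadratic, and the measure constraint passes to the limit via your fact (iii). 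What your approach buys is a short, self-contained proof for these two specific costs; what it does not give is the extension to arbitrary monotone, $\gamma$-lower semicontinuous functionals $F$ stated in Remark \ref{general}, for which the compactness of the class of domains (or of the associated capacitary measures) is really needed. Two cosmetic points: $\|v_n\|_{L^2(m)}=1$ exactly (not just in the limit) since the $e_n^i$ are orthonormal; and you should dispose in one line of the degenerate case $\inf_\O\lambda_k(\O)=+\infty$, where any admissible quasi-open set is trivially optimal.
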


\begin{oss}\label{general}
The existence result of Theorem \ref{main} holds, in the same form, for several other shape functionals $F(\O)$; the only required assumptions (see \cite{buve12}) are:

\begin{itemize}
\item[-] $F$ is monotone decreasing with respect to the inclusion, that is
$$F(\O_1)\le F(\O_2)\qquad\hbox{whenever }\O_2\subset\O_1;$$
\item[-] $F$ is $\gamma$-lower semi-continuous, that is
$$F(\O)\le\liminf_{n\to\infty}F(\O_n)\qquad\hbox{whenever }w_{\O_n}\to w_\O\hbox{ in }L^2(X,m)$$
where $w_\O$ is the solution of the minimization problem \eqref{eneq} with $f=1$.
\end{itemize}
For instance, the following cases belong to the class above.

\medskip{\it Integral functionals.} Given a right-hand side $f$ we consider the PDE formally written as
$$-\Delta u+u=f\hbox{ in }\O,\qquad u\in H_0(\O),$$
whose precise meaning is given through the minimization problem \eqref{eneq}, and which provides, for every admissible domain $\O$, a unique solution $u_\O$ that we assume extended by zero outside of $\O$. The cost $F(\O)=J(u_\O)$ is then obtained by taking
$$J(u)=\int_Xj\big(x,u(x)\big)\,dm$$
for a suitable integrand $j$. If $f\ge0$ and $j(x,\cdot)$ is decreasing, this cost verifies the conditions above.

\medskip{\it Spectral optimization.} For every admissible domain $\O$ we consider the eigenvalues $\lambda_k(\O)$ of Definition \ref{lb} and the spectrum $\lambda(\O)=\big(\lambda_k(\O)\big)_k$. Taking the cost
$$F(\O)=\Phi\big(\lambda(\O)\big)$$
we have that the assumptions above are satisfied as soon as the function $\Phi:[0,+\infty]^\N\to[0,+\infty]$ is lower semicontinuous and increasing, in the sense that
$$\displaylines{\lambda_k^h\to\lambda_k\quad
\forall k\in\N\ \ \Rightarrow\ \ \Phi(\lambda)
\le\displaystyle\liminf_{h\to\infty}\Phi(\lambda^h)\,,\cr
\hskip-1.25truecm\lambda_k\le\mu_k\quad
\forall k\in\N\ \ \Rightarrow\ \ \Phi(\lambda)\le\Phi(\mu)\,.\cr}$$
\end{oss}

\subsection{Finsler manifolds}

Consider a differentiable manifold $M$ of dimension $d$ endowed with a Finsler structure, i.e. with a map $F:TM\rightarrow[0,+\infty)$ which has the following properties:
\begin{enumerate}
\item $F$ is smooth on $TM\setminus\{0\}$;
\item $F$ is 1-homogeneous, i.e. $F(x,\lambda X)=|\lambda|F(x,X)$, $\forall \lambda\in\R$;
\item $F$ is strictly convex, i.e. the Hessian matrix $g_{ij}(x)=\frac{1}{2}\frac{\partial^2}{\partial X^i\partial X^j}[F^2](x,X)$ is positive definite for each $(x,X)\in TM$.
\end{enumerate}
With these properties, the function $F(x,\cdot):T_x M\rightarrow [0,+\infty)$ is a norm on the tangent space $T_xM$, for each $x\in M$. We define the gradient of a function $f\in C^\infty(M)$ as $Df(x):=F^\ast(x,df_x)$, where $df_x$ stays for the differential of $f$ at the point $x\in M$ and $F^\ast(x,\cdot):T^\ast_xM\to\R$ is the co-Finsler metric, defined for every $\xi\in T^\ast_xM$ as
$$F^\ast(x,\xi)=\sup_{y\in T_xM}\frac{\xi(y)}{F(x,y)}.$$

 The Finsler manifold $(M,F)$ is a metric space with the distance:
$$d_F(x,y)=\inf\Big\{\int_0^1 F(\gamma(t),\dot\gamma(t))\,dt\ :\ \gamma:[0,1]\to M,\ \gamma(0)=x,\ \gamma(1)=y\Big\}.$$
For any finite Borel measure $m$ on $M$, we define $H:=H^1_0(M,F,m)$ as the closure of the set of differentiable functions with compact support $C^\infty_c(M)$, with respect to the norm 
$$\|u\|:=\sqrt{\|u\|_{L^2(m)}^2+\|Du\|_{L^2(m)}^2}.$$
The functionals $\lambda_k$ and $E_f$ are defined as in \eqref{lb} and \eqref{en}, on the class of quasi-open sets, related to the $H^1(M,F,m)$ capacity. Various choices for the measure $m$ are available, according to the nature of the Finsler manifold $M$. For example, if $M$ is an open subset of $\R^d$, it is natural to consider the Lebesgue measure $m=\L^d$. In this case, the non-linear operator associated to the functional $\int F^\ast(x,du_x)^2\,dx$ is called Finsler Laplacian. On the other hand, for a generic manifold $M$ of dimension $d$, a canonical choice for $m$ is the Busemann-Hausdorff measure $m_F$, i.e. the $d$-dimensional Hausdorff measure with respect to the distance $d_F$. The non-linear operator associated to the functional $\int F^\ast(x,du_x)^2\,dm_F(x)$ is the generalization of the Laplace-Beltrami operator and its eigenvalues are defined as in \eqref{lbk}. In view of Theorem \ref{main}, we have the following existence results:

\begin{teo}\label{fint}
Given a compact Finsler manifold $(M,F)$ with Busemann-Hausdorff measure $m_F$, the following problems have solutions:
$$\min\Big\{\lambda_k(\O)\ :\ m_F(\O)\le c,\ \O\hbox{ quasi-open, }\O\subset M\Big\},$$
$$\min\Big\{E_f(\O)\ :\ m_F(\O)\le c,\ \O\hbox{ quasi-open, }\O\subset M\Big\},$$
for any $k\in\N$, $0<c\le m_F(M)$ and $f\in L^2(M,m_F)$.
\end{teo}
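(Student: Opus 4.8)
The plan is to derive Theorem \ref{fint} as a direct application of the general existence result Theorem \ref{main}, so the work reduces to verifying that the specific data $(X,d,m,H,D)$ coming from a compact Finsler manifold satisfies all the structural hypotheses required there. First I would set $X=M$, $d=d_F$, $m=m_F$, and $H=H^1_0(M,F,m_F)$, with $D$ the Finsler gradient $Du(x)=F^\ast(x,du_x)$ extended from $C^\infty_c(M)$ by continuity with respect to the norm $\|u\|=\big(\|u\|_{L^2(m_F)}^2+\|Du\|_{L^2(m_F)}^2\big)^{1/2}$. Since $M$ is compact, $(M,d_F)$ is a compact — hence separable — metric space, and $m_F$ is a finite Borel measure on it, so the ambient-space requirements of Theorem \ref{main} hold immediately. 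One should also note that rescaling is harmless: the constraint $m_F(\O)\le c$ with $0<c\le m_F(M)$ can be normalized to the form $m(\O)\le1$ appearing in Theorem \ref{main} by replacing $m_F$ with $m_F/c$, which changes neither $H$ nor the class of quasi-open sets nor the minimizers, only scales the functionals.

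Next I would check the algebraic/lattice structure. The space $H$ is a Riesz subspace of $L^2(m_F)$ with the Stone property: for $u\in C^\infty_c(M)$ the truncations $u\wedge1$ and $|u|$ lie in $H^1_0$ (this is the classical chain-rule/truncation fact for Sobolev functions, valid here because $F^\ast(x,\cdot)$ is a norm and the pointwise gradient bound $F^\ast(x,d(|u|)_x)=F^\ast(x,du_x)$ a.e.\ passes to the limit), and these operations are continuous for the $L^2$ topology combined with the gradient bound. The map $D$ is $1$-homogeneous because $F^\ast(x,\cdot)$ is $1$-homogeneous, it is convex because $F^\ast(x,\cdot)$ is a norm hence convex and $\xi\mapsto F^\ast(x,\xi)$ composed with the linear map $u\mapsto du_x$ is convex, and it is local: since $F^\ast(x,\cdot)$ is positively homogeneous and a genuine norm (ellipticity from strict convexity of $F$), the gradient of $u\vee v$ agrees a.e.\ with $Du$ on $\{u>v\}$ and with $Dv$ on $\{u\le v\}$, exactly the locality identity required. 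Finally ($\HH$3) is automatic: $C^\infty_c(M)=C^\infty(M)\subset C(M)$ is dense in $H$ by the very definition of $H$ as the $\|\cdot\|$-closure of $C^\infty_c(M)$.

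The two remaining conditions, ($\HH$1) compactness of $H\hookrightarrow L^2(m_F)$ and ($\HH$2) $L^2$-lower semicontinuity of $u\mapsto\int_M|Du|^2\,dm_F$, are where the real content lies, and I expect ($\HH$1) to be the main obstacle. For ($\HH$2): lower semicontinuity follows from convexity of $u\mapsto\int_M F^\ast(x,du_x)^2\,dm_F$ together with strong $L^2$ convergence — for a bounded-in-$H$ sequence $u_n\to u$ in $L^2$, a Mazur-lemma / weak-compactness argument on the gradients in $L^2(m_F;T^\ast M)$ identifies the weak limit with $du$ and convexity of $\xi\mapsto F^\ast(x,\xi)^2$ yields the $\liminf$ inequality; one must be slightly careful that the pointwise function $\xi\mapsto F^\ast(x,\xi)^2$ is convex, which it is since $F^\ast(x,\cdot)$ is a nonnegative convex function. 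For ($\HH$1): on a compact Riemannian (or smooth, with a fixed auxiliary metric) manifold $M$ the Rellich–Kondrachov theorem gives compactness of $H^1_0\hookrightarrow L^2$; the point is that the Finsler norm $F^\ast(x,\cdot)$ is, by smoothness and strict convexity of $F$ together with compactness of $M$, uniformly comparable to the norm induced by any fixed smooth Riemannian metric $g_0$ on $M$, so $\|u\|_{H^1(M,F,m_F)}$ and $\|u\|_{H^1(M,g_0)}$ are equivalent norms on $C^\infty_c(M)$ and hence $H^1_0(M,F,m_F)=H^1_0(M,g_0)$ with equivalent norms; compactness then transfers from the Riemannian case. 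The delicate points to handle carefully are: the uniform two-sided bound on $F^\ast$ (which uses compactness of the unit sphere bundle and positivity of the Hessian $g_{ij}$), and the comparability of $m_F$ with the Riemannian volume of $g_0$ (both are finite positive smooth densities on compact $M$, hence mutually bounded). Once all of ($\HH$1)–($\HH$3) and the Stone/locality/convexity/homogeneity properties are in place, Theorem \ref{main} applies verbatim and delivers quasi-open minimizers for both $\lambda_k(\O)$ and $E_f(\O)$ under the constraint $m_F(\O)\le c$, which is the assertion.
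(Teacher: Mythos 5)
Your proposal is correct and follows exactly the route the paper intends: Theorem \ref{fint} is stated as an immediate consequence of Theorem \ref{main} (``In view of Theorem \ref{main}\dots''), and your verification of separability, finiteness of $m_F$, the Riesz/Stone and locality properties, ($\HH$1)--($\HH$3) via uniform comparability of $F^\ast$ with a fixed Riemannian metric on the compact manifold, and the harmless rescaling of the constraint, supplies precisely the details the paper leaves implicit.
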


\begin{teo}\label{finlap}
Consider an open set $M\subset\R^d$ endowed with a Finsler structure $F$ and the Lebesgue measure $\L^d$. If the diameter of $M$ with respect to the Finsler metric $d_F$ is finite, then the following problems have solutions:
$$\min\Big\{\lambda_k(\O)\ :\ |\O|\le c,\ \O\hbox{ quasi-open, }\O\subset M\Big\},$$
$$\min\Big\{E_f(\O)\ :\ |\O|\le c,\ \O\hbox{ quasi-open, }\O\subset M\Big\},$$
where $k\in\N$, $|\O|$ denotes the Lebesgue measure of $\O$, $c$ is a constant such that $0<c\le|M|$ and $f\in L^2(M)$.
\end{teo}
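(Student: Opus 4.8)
The plan is to obtain Theorem~\ref{finlap} as a corollary of Theorem~\ref{main}, applied with $X=M$, distance $d=d_F$, measure $m=\mathcal{L}^d$, Sobolev space $H=H^1_0(M,F,\mathcal{L}^d)$ and gradient operator $Du(x):=F^\ast(x,du_x)$. The structural hypotheses are straightforward. The space $H$ is a Riesz subspace of $L^2(m)$ with the Stone property, since the truncations $u\mapsto u\wedge1$ and $u\mapsto|u|$ are $\|\cdot\|_H$-continuous; this reduces to the classical Euclidean statements, because by smoothness of $F$ the co-Finsler norm $F^\ast(x,\cdot)$ is comparable to the Euclidean one uniformly on relatively compact subsets of $M$. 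The operator $D$ is $1$-homogeneous, convex and local: $F^\ast(x,\cdot)$ is a norm, $u\mapsto du_x$ is linear, and the weak differential is a local operator, so that $D(u\vee v)=Du\cdot I_{\{u>v\}}+Dv\cdot I_{\{u\le v\}}$ holds $\mathcal{L}^d$-a.e. Assumption~($\mathcal{H}3$) is nothing but the definition of $H$ as the $\|\cdot\|_H$-closure of $C^\infty_c(M)\subset H\cap C(X)$. Assumption~($\mathcal{H}2$), i.e. the $L^2$-lower semicontinuity along $H$-bounded sequences of the convex $1$-homogeneous functional $u\mapsto\int_M F^\ast(x,du_x)^2\,d\mathcal{L}^d$, again follows from the local Euclidean comparison: an $H$-bounded sequence converging in $L^2$ converges weakly in $H^1_{loc}$, its differentials converge weakly in $L^2_{loc}$, and the lower semicontinuity of convex integrands gives the inequality.

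The substantial point is the compact inclusion ($\mathcal{H}1$), and this is precisely where the finiteness of $\delta:=\diam_{d_F}(M)$ enters, playing the role that compactness of $M$ plays in Theorem~\ref{fint}. Locally the compactness is classical: on every open set $U\Subset M$ one has $c_U|\xi|\le F^\ast(x,\xi)\le C_U|\xi|$ with constants $0<c_U\le C_U$, so $\|\cdot\|_H$ is equivalent to the Euclidean norm on $H^1_0(U)$ and the Rellich--Kondrachov theorem yields precompactness in $L^2(U)$; a diagonal extraction along an exhaustion $U_1\Subset U_2\Subset\cdots$ of $M$ then furnishes, for any $H$-bounded sequence $u_n$, a subsequence converging in $L^2_{loc}(M)$. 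What must be added is a uniform smallness of the tails: for every $\eps>0$ there should be a compact $K\subset M$ with $\int_{M\setminus K}|u_n|^2\,d\mathcal{L}^d\le\eps$ for all $n$. By density one may take $u_n\in C^\infty_c(M)$; then, for a point $x$ lying far out, one joins $x$ to a point of $M\setminus\spt u_n$ by a $d_F$-almost-minimizing curve $\gamma$, whose $F$-length is at most $\delta$, and from the dual-norm inequality $|\langle du_n,\dot\gamma\rangle|\le F^\ast(\cdot,du_n)\,F(\cdot,\dot\gamma)$ together with the Cauchy--Schwarz inequality with respect to the $F$-length measure $F(\cdot,\dot\gamma)\,dt$ one gets the pointwise bound $|u_n(x)|^2\le\delta\int_\gamma F^\ast(\cdot,du_n)^2\,F(\cdot,\dot\gamma)\,dt$. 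Integrating this in $x$ over $M\setminus K$ and invoking a coarea-type estimate --- where finiteness of $\delta$ is decisive, since it makes the $F$-arclength of the curves reaching $M\setminus K$ negligible compared with $\mathcal{L}^d$ --- one bounds the result by $C(K)\,\|Du_n\|^2_{L^2(m)}$ with $C(K)\to0$ as $K\uparrow M$, which gives the tail bound because $\|Du_n\|_{L^2(m)}$ is bounded. Equivalently, near each end of $M$ one may take the $d_F$-distance to the ideal boundary as a coordinate, turning the problem into a weighted Sobolev embedding on a bounded domain with a power-like weight blow-up at the boundary and a Dirichlet boundary condition --- a classical compactness situation. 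Combining the $L^2_{loc}$ convergence with the tail bound gives strong convergence in $L^2(M,m)$, hence ($\mathcal{H}1$).

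With ($\mathcal{H}1$)--($\mathcal{H}3$) established, Theorem~\ref{main} applies with the volume constraint $m(\O)\le c$ in place of $m(\O)\le1$ --- the argument there does not depend on the value of the positive constant --- and produces quasi-open minimizers, i.e. level sets $\{u>0\}$ of functions $u\in H$, for both functionals $\lambda_k$ and $E_f$ on $M$; when $\mathcal{L}^d(M)=+\infty$ one uses the energy in the form~\eqref{eneq1}, the same estimate supplying the Poincar\'e-type inequality on sets of finite measure required by the Remark following Definition~\ref{en}. The only genuinely non-routine ingredient, and thus the main obstacle, is this uniform tail estimate: one has to turn the qualitative fact that a finite Finsler diameter forces $F^\ast$ to blow up at infinity into a quantitative, sequence-uniform control of the $L^2(m)$-mass of $H$-bounded functions near the ends of $M$.
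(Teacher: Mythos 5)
Your overall route is the paper's: Theorem~\ref{finlap} is presented there as a direct consequence of Theorem~\ref{main} (with the actual verification deferred to \cite{buve12}), and your checks of the Riesz/Stone structure, of the convexity, homogeneity and locality of $Du=F^\ast(x,du_x)$, and of ($\HH$2)--($\HH$3) are routine and correct. You are also right that ($\HH$1) is the only substantive point and that it is exactly where the finiteness of $\diam_{d_F}(M)$ must enter.

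The gap is in your proof of ($\HH$1), at the step where you integrate the pointwise bound $|u(x)|^2\le\delta\int_{\gamma_x}F^\ast(\cdot,du)^2\,d\ell_F$ over $x\in M\setminus K$ and assert, ``by a coarea-type estimate'', that the result is bounded by $C(K)\,\|Du\|_{L^2(m)}^2$ with $C(K)\to0$. To pass from the family of line integrals to the volume integral you need the superposition measure $\int_{M\setminus K}(\gamma_x)_{\#}\ell_F\,dx$ to have density at most $C(K)$ with respect to $\L^d$, and nothing in the finiteness of $\delta$ forces that density to be small: the curves are short in Euclidean length precisely where $F$ is large, so their $F$-arclength measure can concentrate on sets of small Lebesgue measure with density of order one. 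A concrete obstruction: on $M=B_1\subset\R^2$ take $F(x,X)=\phi(x)^{-1}|X|$ with $\phi$ smooth, $\phi\equiv r_j$ on disjoint balls $B_{r_j/2}(q_j)$ with $q_j\to\partial M$, $r_j\to0$, and $\phi\equiv1$ off $\bigcup_jB_{r_j}(q_j)$. Then $\diam_{d_F}(M)<+\infty$ (each small ball has Finsler diameter of order one and the bulk is Euclidean), yet the normalized bumps $u_j(x)=2r_j^{-1}\chi(2(x-q_j)/r_j)$ satisfy $\|u_j\|_{L^2}=1$ and $\|Du_j\|_{L^2}^2=\int\phi^2|\nabla u_j|^2\,dx=4\|\nabla\chi\|_{L^2}^2$, have pairwise disjoint supports, and hence admit no $L^2$-convergent subsequence; in particular $\int_{M\setminus K}|u_j|^2\,dx=1$ for all large $j$, so your constant $C(K)$ does not tend to zero. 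This shows not merely that your sketch is incomplete but that no argument can derive ($\HH$1) from the finiteness of the Finsler diameter alone: some further hypothesis (e.g.\ uniform two-sided comparability of $F^\ast(x,\cdot)$ with the Euclidean norm up to the boundary, or finiteness of the Busemann--Hausdorff measure as in Theorem~\ref{fint}) is needed, and you should check the precise assumptions under which the compact embedding is actually established in \cite{buve12} before asserting the reduction to Theorem~\ref{main}.
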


\begin{oss}
In \cite{kawohl} it was shown that if the Finsler metrics $F(x,\cdot)$ on $\R^d$ does not depend on $x\in\R^d$, then the solution of the optimization problem 
$$\min\Big\{\lambda_1(\O)\ :\ |\O|\le c,\ \O\hbox{ quasi-open, }\O\subset\R^d\Big\},$$
is the ball of measure $c$. It is clear that it is also the case when in the hypotheses of Theorem \ref{finlap} one considers $c>0$ such that there is a ball of measure $c$ contained in $M$. On the other hand , if $c$ is big enough the solution is not, in general, the geodesic ball in $M$ (see \cite{heou01}). If the Finsler metric is not constant in $x$, the solution will not be a ball even for small $c$. In this case it is natural to ask whether the optimal set gets close to the geodesic ball as $c\to0$. In \cite{sicbaldi} this problem was discussed in the case when $M$ is a Riemannian manifold. The same question for a generic Finsler manifold is still open.
\end{oss}

\subsection{Gaussian spaces}\label{sgauss}

Consider the Euclidean space $\R^2$ endowed with the Gaussian measure 
$$m=(2\pi)^{-1}\,\exp\left(-\frac{x_1^2+x_2^2}{2}\right)dx_1dx_2.$$
Note that an orthonormal basis on $L^2(m)$ is given by the functions $H_{n,k}(x_1,x_2):=H_n(x_1)H_k(x_2)$, $n,k\in\N$, where $H_n:\R\to\R$ are the Hermite polynomials 
$$H_{n}(x):=\frac{(-1)^n}{\sqrt{n!}}\,\exp(x^2/2)\,\partial_x^n\left(\exp(-x^2/2)\right),$$
which satisfy 
$$\partial_x H_n(x)=\sqrt{n}H_{n-1}(x),\qquad \partial^2_xH_n(x)-xH_n(x)=nH_n(x).$$
We define the Sobolev space $W^{1,2}(\R^2,m)$ as
\be
W^{1,2}(\R^2,m)=\left\{u\in L^2(m)\ :\ |\nabla u|\in L^2(m)\right\},
\ee
where $\nabla u$ is the distributional gradient of $u$. It can be characterized using the basis $\left\{H_{n,k}\right\}_{n,k}$ as
\be
W^{1,2}(\R^2,m)=\big\{u\in L^2(m)\ :\ \sum_{n,k}(n+k)u_{n,k}^2<+\infty\big\}, 
\ee
where $u_{n,k}:=\int_{\R^2}H_{n,k}u\,dm$. At this point it is clear that the inclusion $W^{1,2}(\R^2,m)\subset L^2(m)$ is compact and that the estimate $\|u\|_{L^2(m)}\le\|\nabla u\|_{L^2(m)}$ holds. Moreover, the linear combinations of Hermite polynomials are dense in $W^{1,2}(\R^2,m)$ and so $C^\infty(\R^2)\cap W^{1,2}(\R^2,m)$ is dense in $W^{1,2}(\R^2,m)$. Thus, we can define the capacity $\cp(E)$ of any set $E\subset\R^2$ and the space $W^{1,2}_0(\O,m)$ of functions $u\in W^{1,2}(\R^2,m)$ such that $\cp(\{u\ne0\}\cap\O^c)=0$. For any $f\in L^2(m)$, there is a unique $w\in W^{1,2}_0(\O,m)$, which minimizes the functional 
$$J_f(u)=\frac12\int_\O|\nabla u|^2\,dm-\int_\O fu\,dm,$$
and defines the energy of $\O$ as $E_f(\O):=J_f(w)$.
We note that for any $v\in W^{1,2}_0(\O,m)$ we have
$$\int_{\O}\nabla w\cdot\nabla v\,dm=\int_{\O}fw\,dm,$$
and so, we say that $w$ is the weak solution of the problem $-\Delta w+x\cdot\nabla w=f$ in $W^{1,2}_0(\O,m)$. Since $\|\nabla u\|_{L^2(m)}\le\|f\|_{L^2(m)}$, we have that the operator $R_\O:L^2(m)\to L^2(m)$, which associates to each $f\in L^2(m)$ the function $R_\O(f):=w$, is compact. Thus $R_\O$ is the resolvent of an operator $-\Delta+x\cdot\nabla$, which is the Ornstein-Uhlenbeck operator on $\O$ and which has a discrete spectrum $\sigma(\O)$, given by the sequence $0\le\lambda_1(\O)\le\lambda_2(\O)\le\dots$. Note that, in the case $\O=\R^2$, the spectrum is given by $\sigma(\R^2)=\{n+k:\ n,k\in\N\}$. In particular, $\lambda_1(\R^2)=0$ and $\lambda_2(\R^2)=\lambda_3(\R^2)=1$. We also note that the $k$-th eigenvalue $\lambda_k(\O)$ can be represented as in \eqref{lb} and so, if $\O\ne\R^2$, then $\lambda_1(\O)>0$. Applying Theorem \ref{main}, we obtain the existence of optimal domains for any $\lambda_k$.

\begin{teo}\label{maingauss}
Consider $\R^2$ endowed with a non-degenerate Gaussian measure $m$, i.e. with invertible covariance matrix. Then, for any $k\in\N$, $f\in L^2(m)$ and $0\le c\le1$, the following optimization problems have solutions:
$$\min\Big\{\lambda_k(\O)\ :\ \O\subset\R^2,\ m(\O)\le c\Big\},$$
$$\min\Big\{E_f(\O)\ :\ \O\subset\R^2,\ m(\O)\le c\Big\},$$
which are quasi-open sets.
\end{teo}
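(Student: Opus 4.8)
The plan is to obtain Theorem~\ref{maingauss} as an application of the abstract existence result Theorem~\ref{main}: the whole task is to produce, for a non-degenerate Gaussian measure $m$ on $\R^2$, a space $H\subset L^2(\R^2,m)$ and an operator $D$ meeting all the hypotheses imposed before that theorem, namely that $H$ is a Riesz subspace of $L^2(m)$ with the Stone property, that $D$ is non-negative, convex, $1$-homogeneous and local, and that $(\HH1)$, $(\HH2)$, $(\HH3)$ hold. First I would reduce to the standard Gaussian $m_{0}$. Writing the (invertible) covariance as $\Sigma=AA^{T}$, the linear change of variables $y=A^{-1}x$ has $m_{0}$ as push-forward of $m$, carries quasi-open subsets of $\R^2$ to quasi-open subsets and preserves the mass constraint since it preserves the measure; moreover it transforms $\int_{\O}|\nabla u|^{2}\,dm$ into $\int_{\tilde\O}\langle B\nabla\tilde u,\nabla\tilde u\rangle\,dm_{0}$ for a fixed positive definite matrix $B$ depending only on $A$, and $\int_{\O}u^{2}\,dm$ and $\int_{\O}uf\,dm$ into the corresponding integrals against $m_{0}$. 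Hence the functionals $\lambda_k$ and $E_f$ of Definitions~\ref{lb} and~\ref{en} on $(\R^2,m,|\nabla\cdot|)$ coincide, set by set, with those on $(\R^2,m_{0},D)$ where $Du:=\sqrt{\langle B\nabla u,\nabla u\rangle}$, so it suffices to solve the two optimization problems in the latter setting.

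On $(\R^2,m_{0},D)$ the operator $Du=\sqrt{\langle B\nabla u,\nabla u\rangle}$ is non-negative, $1$-homogeneous and convex, being a norm of $\nabla u$; it is local, since $\nabla(u\vee v)=\nabla u$ a.e.\ on $\{u>v\}$ and $\nabla(u\vee v)=\nabla v$ a.e.\ on $\{u\le v\}$; and $H:=W^{1,2}(\R^2,m_{0})$ is a Riesz subspace of $L^2(m_{0})$ with the Stone property, as $|u|,u\wedge1\in W^{1,2}$ whenever $u\in W^{1,2}$. For the three analytic conditions I would argue via the Hermite basis $\{H_{n,k}\}$, along the lines of the discussion preceding the statement for the standard Gaussian: from $W^{1,2}(\R^2,m_{0})=\{u:\sum_{n,k}(n+k)u_{n,k}^{2}<\infty\}$ and $\|u\|_{H}^{2}\simeq\sum_{n,k}(1+n+k)u_{n,k}^{2}$ one obtains $(\HH3)$ by truncating the Hermite expansion (the truncations are smooth and converge in $\|\cdot\|_{H}$), and $(\HH1)$ because a set bounded in $H$ has uniformly small tails $\sum_{n+k>N}u_{n,k}^{2}\le(1+N)^{-1}\|u\|_{H}^{2}$ and is therefore relatively compact in $L^2(m_{0})$. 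For $(\HH2)$ one uses that $H$ is a Hilbert space and $u\mapsto\nabla u$ a bounded linear map into $L^2(m_{0};\R^2)$: if $u_n$ is bounded in $H$ and $u_n\to u$ in $L^2(m_{0})$, then, passing to a subsequence realizing $\liminf_n\int|Du_n|^{2}\,dm_{0}$, we get $u_n\rightharpoonup u$ weakly in $H$, hence $\nabla u_n\rightharpoonup\nabla u$ weakly in $L^2(m_{0};\R^2)$, and weak lower semicontinuity of $v\mapsto\int\langle Bv,v\rangle\,dm_{0}$ yields $u\in H$ together with $\int|Du|^{2}\,dm_{0}\le\liminf_n\int|Du_n|^{2}\,dm_{0}$.

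With all the hypotheses in place, Theorem~\ref{main} provides quasi-open minimizers for both $\lambda_k$ and $E_f$; since its proof does not use the particular value of the mass bound (alternatively, one rescales the measure), the conclusion holds with the constraint $m_{0}(\O)\le c$ for every $0\le c\le1$, and transporting the optimal set back through the change of variables proves Theorem~\ref{maingauss}. I expect the only point needing real care---everything else being either the bookkeeping of a linear substitution or the short Hermite-basis estimates recalled above---to be the verification that the transformed operator $Du=\sqrt{\langle B\nabla u,\nabla u\rangle}$ genuinely satisfies \emph{every} item on the list (in particular locality, and $(\HH2)$ with a non-identity matrix $B$) and that the mass constraint together with the values of $\lambda_k$ and $E_f$ transform exactly, so that optimality is transferred without loss. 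One could also avoid the change of variables and repeat the Hermite-type analysis directly for a general $\Sigma$, replacing $\{H_{n,k}\}$ by the eigenfunctions of the associated Ornstein--Uhlenbeck operator.
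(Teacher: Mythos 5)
Your proposal is correct and follows essentially the same route as the paper: Section~\ref{sgauss} verifies the hypotheses of Theorem~\ref{main} for the standard Gaussian exactly as you do, using the Hermite expansion $\|u\|_H^2\simeq\sum_{n,k}(1+n+k)u_{n,k}^2$ to get density of smooth functions, compactness of the embedding, and lower semicontinuity, and then invokes the abstract existence result. The only place you go beyond the paper's own (rather terse) treatment is in spelling out the linear change of variables that reduces a general invertible covariance matrix to the standard Gaussian with the anisotropic gradient $Du=\sqrt{\langle B\nabla u,\nabla u\rangle}$ --- a step the paper leaves implicit --- and your verification that this $D$ still satisfies locality, homogeneity and $(\HH2)$ is sound.
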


\begin{oss}\label{gausspen}
Theorem \ref{maingauss} also applies to penalized problems, i.e. for any $\Lambda>0$, $k\in\N$ and $f\in L^2(m)$, there is a solution of the problems 
\be\label{gausspen1}
\min\Big\{\lambda_k(\O)+\Lambda m(\O)\ :\ \O\subset\R^2\Big\},
\end{equation}
\begin{equation}\label{gausspen2}
\min\Big\{E_f(\O)+\Lambda m(\O)\ :\ \O\subset\R^2\Big\},
\ee
which is a quasi-open set. As we will see in the example below, these problems are sometimes easier to threat when comes to regularity questions and qualitative study of the optimal sets. 
\end{oss}

\begin{exam}\label{exhalf}
Let $f$ be the constant $1$ in $\R^d$. By Remark \ref{gausspen}, the problem \eqref{gausspen2} has a solution $\O$, which we assume to be open and with boundary $\partial\O$ of class $C^2$ (that we expect to be true), we can perform the shape derivative of the energy $E_1$ with respect to some vector field $V$ regular enough. Indeed, following \cite[Chapter 5]{pihe05}, let $V:\R^d\to\R^d$ be a $C^\infty_c$ vector field and for each $t>0$ small enough, define $\Phi_t(x)=x+tV(x)$ and $\O_t=\Phi_t(\O)$. Then, we have
\be\label{dwdt}
\frac{dE_1(\O_t)}{dt}\Big|_{t=0}=-\frac12\int_\O w'\,dm,
\ee
where $w'$ is the solution of 
\be
\begin{cases}
-\Delta w'+x\cdot\nabla w'=0,\hbox{ in }\O,\\
w'=-V\cdot\nabla w,\hbox{ on }\partial\O.
\end{cases}
\ee
We denote with $w$ the (strong) solution of  
$$-\Delta w+x\cdot\nabla w=1,\qquad w\in W^{1,2}_0(\O,m),$$ 
and integrate by parts in \eqref{dwdt} obtaining 
\be\label{dEdt}
\frac{dE_1(\O_t)}{dt}\Big|_{t=0}=-\frac12\int_\O (-\Delta w+x\cdot\nabla w) w'\,dm=-\frac{1}{4\pi}\int_{\partial\O}\left|\frac{\partial w}{\partial n}\right|^2 V\cdot n\, e^{-|x|^2/2}\,d\H^{d-1},
\ee
where $n$ is the exterior normal on $\partial\O$ and $w$ is the energy function on $\O$, that is the solution of the Ornstein-Uhlenbeck PDE 
$$-\Delta w+x\cdot\nabla w=1\quad\hbox{in }\O,\qquad w\in W^{1,2}_0(\O,m).$$
On the other hand, we have 
\be\label{dmdt}
\frac{dm(\O_t)}{dt}\Big|_{t=0}=\frac{1}{2\pi}\int_{\partial\O}e^{-|x|^2/2}\,V\cdot n\,d\H^{d-1},
\ee
and so, by the optimality of $\O$, 
$$\left(\frac{dE_1(\O_t)}{dt}+\Lambda\frac{dm(\O_t)}{dt}\right)\Big|_{t=0}=0$$
for any vector field $V$. By \eqref{dEdt} and \eqref{dmdt} we obtain
$$\left|\frac{\partial w}{\partial n}\right|=\sqrt{2\Lambda}\qquad\hbox{on }\partial\O.$$

Summarizing, we have obtained that if an optimal domain $\O$ is regular enough, then the following overdetermined boundary value problem has a solution:
\be\label{over}
\begin{cases}
\begin{array}{ll}
-\Delta w+x\cdot\nabla w=1,&\hbox{ in }\O,\\
w=0,&\hbox{ on }\partial\O,\\
\frac{\partial w}{\partial n}=-\sqrt{2\Lambda},&\hbox{ on }\partial\O.
\end{array}
\end{cases}
\ee
It is straightforward to check that the following domains satisfy this condition:
\begin{itemize}
\item the half-space $\O=\{x_1>c\}$, for a given $c\in\R$,
\item the strip $\O=\{|x_1|<a\}$, for some $a>0$,
\item the euclidean ball $\O=\{|x|<r\}$, for some $r>0$,
\item the external domain of a ball $\O=\{|x|>r\}$, for $r>0$.
\end{itemize}
We do not know which of these domains is optimal and if there are other domains $\O$ for which the overdetermined problem \eqref{over} has a solution.
\end{exam}

\subsection{Carnot-Carath\'eodory spaces}

Consider a bounded open and connected set $\Dr\subset \R^d$ and $C^{\infty}$ vector fields $Y_1,\dots,Y_n$ defined on a neighbourhood $U$ of $\overline\Dr$. We say that the vector fields satisfy the H\"ormander's condition on $U$, if the Lie algebra generated by $Y_1,\dots,Y_n$ has dimension $d$ in each point $x\in U$. 

 We define the Sobolev space $W^{1,2}_0(\Dr;Y)$ on $\Dr$ with respect to the family of vector fields $Y=(Y_1,\dots,Y_n)$ as the closure of $C^\infty_c(\Dr)$ with respect to the norm  
$$\|u\|_{Y}=\left(\|u\|_{L^2}^2+\sum_{j=1}^n \|Y_j u\|_{L^2}^2\right)^{1/2},$$
where the derivation $Y_ju$ is intended in sense of distributions.
For $u\in W^{1,2}_0(\Dr;Y)$, we define the gradient $Yu=(Y_1u,\dots,Y_nu)$ and set $|Yu|=\left(|Y_1u|^2+\dots+|Y_nu|^{2}\right)^{1/2}\in L^2(\Dr)$.

Setting $Du:=|Yu|$ and $H:=W^{1,2}_0(\Dr;Y)$, we define, for any $\O\subset\Dr$, the energy $E_f(\O)$ and the $k^{th}$ eigenvalue $\lambda_k(\O)$ of the operator $Y_1^2+\dots+Y_n^2$, as in \eqref{en} and \eqref{lb}. The following existence result is a consequence of Theorem \ref{main}.

\begin{teo}\label{cc}
Consider a bounded open set $\Dr\subset\R^d$ and a family $Y=(Y_1,\dots,Y_n)$ of $C^{\infty}$ vector fields defined on an open neighbourhood $U$ of the closure $\overline\Dr$ of $\Dr$. If $Y_1,\dots,Y_n$ satisfy the H\"ormander condition on $U$, then for any $k\in\N$, $0<c\le|\Dr|$ and $f\in L^2(\Dr)$, the following shape optimization problems admit a solution:
\be\label{cck}
\min\Big\{\lambda_k(\O)\ :\ \O\subset\Dr,\ \O\hbox{ quasi-open, }|\O|\le c\big\},
\ee
\be\label{cce}
\min\Big\{E_f(\O):\ \O\subset\Dr,\ \O\hbox{ quasi-open, }|\O|\le c\big\}.
\ee
\end{teo}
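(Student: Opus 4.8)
The plan is to recognize that the Carnot--Carath\'eodory setting is a particular instance of the abstract framework of Section~\ref{s2}, so that the conclusion follows from Theorem~\ref{main}. I would take as ambient space $X=\Dr$ (every subset of $\R^d$ is a separable metric space, and no compactness or boundedness of $X$ is required), equipped with the finite Borel measure $m$ obtained by restricting $\L^d$ to $\Dr$, and set $H=W^{1,2}_0(\Dr;Y)$ with $Du:=|Yu|$. The first, routine, block of work is to check the structural hypotheses of the abstract setting. The space $H$ is a Hilbert space, its norm $\|\cdot\|_Y$ being induced by the scalar product $\langle u,v\rangle_Y=\int_\Dr uv\,dx+\sum_{j=1}^n\int_\Dr Y_ju\,Y_jv\,dx$; it is a Riesz subspace of $L^2(m)$ with the Stone property, since $|Y_j(u\wedge1)|\le|Y_ju|$ and $|Y_j(|u|)|\le|Y_ju|$ a.e.\ and truncation and modulus are continuous on $H$ (approximate by elements of $C^\infty_c(\Dr)$ and mollify); and $u\mapsto Du=|Yu|$ is convex (the composition of the linear map $u\mapsto Yu$ with a norm on $\R^n$), positively $1$-homogeneous, and local, the locality reducing to the standard identity $Y_j(u\vee v)=Y_ju\cdot I_{\{u>v\}}+Y_jv\cdot I_{\{u\le v\}}$ a.e.\ for $u,v\in H$.

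The substantial point is hypothesis ($\HH$1), namely the compactness of $H\hookrightarrow L^2(m)$, and this is precisely where H\"ormander's condition enters. By the subelliptic a priori estimate available for systems of vector fields satisfying H\"ormander's condition (H\"ormander; see also Rothschild--Stein) there exist $\eps\in(0,1]$ and $C>0$ such that
$$\|u\|_{W^{\eps,2}(\R^d)}\le C\,\|u\|_Y\qquad\text{for every }u\in C^\infty_c(\Dr),$$
hence, by density, for every $u\in H$ regarded as a function on $\R^d$ via extension by $0$. Since $\Dr$ is bounded, a set bounded in $H$ consists of functions supported in the fixed compact set $\overline\Dr$ whose $W^{\eps,2}(\R^d)$ norms are uniformly bounded, and is therefore precompact in $L^2$ by the Rellich--Kondrachov theorem; this gives ($\HH$1). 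An alternative route would be to use the doubling property of $\L^d$ with respect to the Carnot--Carath\'eodory balls together with the Poincar\'e inequality on such balls and the general compact embedding theorem for doubling metric measure spaces supporting a Poincar\'e inequality.

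Hypothesis ($\HH$2) is then the usual weak-compactness argument: if $u_n$ is bounded in $H$ and $u_n\to u$ in $L^2(m)$, a subsequence converges weakly in the Hilbert space $H$, necessarily to $u$, so $u\in H$, and $Y_ju_n\rightharpoonup Y_ju$ weakly in $L^2$ because the distributional identity $\int_\Dr(Y_ju_n)\varphi\,dx=\int_\Dr u_n\,Y_j^\ast\varphi\,dx$ (with $Y_j^\ast$ the formal adjoint of $Y_j$, a first-order operator with smooth coefficients) passes to the limit for every $\varphi\in C^\infty_c(\Dr)$; the convexity of the $L^2$ norm then yields $\int_\Dr|Yu|^2\,dx\le\liminf_n\int_\Dr|Yu_n|^2\,dx$. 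Hypothesis ($\HH$3) is immediate, since $C^\infty_c(\Dr)\subset H\cap C(X)$ is dense in $H$ by the very definition of $W^{1,2}_0(\Dr;Y)$. Having verified ($\HH$1)--($\HH$3) together with the structural assumptions, Theorem~\ref{main} applies and produces quasi-open minimizers of $E_f(\O)$ and of $\lambda_k(\O)$ under the volume constraint $|\O|\le c$; the value $c$ of the bound, in place of $1$, is admissible exactly as in Theorems~\ref{fint}--\ref{maingauss}. The only genuine obstacle is thus ($\HH$1): its proof is not elementary and rests on the subelliptic regularity theory for H\"ormander systems, whereas all the remaining verifications are routine.
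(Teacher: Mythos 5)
Your proposal is correct and follows essentially the same route as the paper: the abstract framework of Theorem \ref{main} is invoked after checking the structural hypotheses, and the only substantive point, the compactness of $H=W^{1,2}_0(\Dr;Y)\hookrightarrow L^2(\Dr)$, is obtained exactly as in the paper from H\"ormander's subelliptic estimate $\|\varphi\|_{H^\eps}\le C(\|\varphi\|_{L^2}+\sum_j\|Y_j\varphi\|_{L^2})$ combined with the compact embedding of the fractional Sobolev space of compactly supported functions into $L^2$. Your additional verifications of ($\HH$2)--($\HH$3) and of the Riesz/Stone/locality structure are the details the paper dismisses as ``straightforward,'' and your alternative doubling-plus-Poincar\'e route is a reasonable aside but not needed.
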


\begin{proof}
It is straightforward to check that the space $H:=W^{1,2}_0(\Dr;Y)$ and the application $Du:=|Yu|$ satisfy the assumptions of Theorem \ref{main}. The only non-trivial claim is the compact inclusion $H\subset L^2(\Dr)$, which follows since $Y_1,\dots,Y_n$ satisfy the H\"ormander condition on $U$. In fact, by the H\"ormander Theorem (see \cite{hormander}), there is some $\epsilon>0$ and some constant $C>0$ such that for any $\varphi\in C^\infty_c(\Dr)$
\be
\|\varphi\|_{H^{\varepsilon}}\le C\left(\|\varphi\|_{L^2}+\sum_{j=1}^{k}\|Y_j\varphi\|_{L^2}\right),
\ee
where we set
$$\|\varphi\|_{H^\varepsilon}=\left(\int_{\R^d}|\widehat\varphi(\xi)|^2(1+|\xi|^2)^\varepsilon\,d\xi\right)^{1/2},$$
being $\widehat\varphi$ the Fourier transform of $\varphi$. Let $H^\varepsilon_0(\Dr)$ be the closure of $C^\infty_c(\Dr)$ with respect to the norm $\|\cdot\|_{H^\varepsilon}$. Since the inclusion $L^2(\Dr)\subset H^\varepsilon_0(\Dr)$ is compact, we have the conclusion.
\end{proof}

\begin{oss}
In the hypotheses of Theorem \ref{cc}, the following optimization problems have a solution:
\be\label{cck1}
\min\Big\{\lambda_k(\O)+\Lambda|\O|\ :\ \O\subset\Dr,\ \O\hbox{ quasi-open}\big\},
\ee
\be\label{cce1}
\min\Big\{E_f(\O)+\Lambda|\O|:\ \O\subset\Dr,\ \O\hbox{ quasi-open}\big\},
\ee
where $k\in\N$, $\Lambda>0$ and $f\in L^2(\Dr)$ are given.
\end{oss}

\begin{exam}
Consider a bounded open set $\Dr\subset\R^2$ and the vector fields $X=\frac{\partial}{\partial x}$ and $Y=x\frac{\partial}{\partial y}$. Since $[X,Y]=\frac{\partial}{\partial y}$, we can apply Theorem \ref{cc} and so, the shape optimization problem \eqref{cce1} has a solution $\O\subset\Dr$. Assuming that $\O$ is regular enough we may repeat the argument from Section \ref{sgauss}. Indeed, suppose that $V$ is a vector field on $\partial\O$ and note that the map $\Phi_t=Id+tV$ is a differomorphism for $t$ small enough. Defining $\O_t=\Phi_t(\O)$ and $w$ the (strong) solution of 
\be\label{ccw}
-\left(\partial_x^2+x^2\partial_y^2\right)w+w=f,\qquad w\in W^{1,2}_0(\O;X,Y),
\ee
where $f\in L^2(\Dr)$, we have that 
\be\label{ccdedt}
\frac{dE_f(\O_t)}{dt}\Big|_{t=0}=-\frac{1}{2}\int_{\O}fw'\,dx,
\ee
where $w'$ is the weak solution of 
$$-\left(\partial_x^2+x^2\partial_y^2\right)w'+w'=0,\qquad w'+V\cdot\nabla w\in W^{1,2}_0(\O_t;X,Y).$$
Using \eqref{ccw} and integrating by parts in \eqref{ccdedt}, we obtain
\be\label{ccdedt1}
\frac{dE_f(\O_t)}{dt}\Big|_{t=0}=-\frac{1}{2}\int_{\partial\O}(V\cdot\nabla w)\left(n\cdot(\partial_xw,x^2\partial_y w)\right)\,d\H^{1}.
\ee
Since
\be\label{ccdmdt}
\frac{d|\O_t|}{dt}\Big|_{t=0}=\int_{\partial\O}V\cdot n\,d\H^{1},
\ee
we have that the energy function $w$ is a solution of the following overdetermined boundary value problem on the optimal set $\O$
\be\label{eqcc}
\begin{cases}
\begin{array}{ll}
-\left(\partial_x^2+x^2\partial_y^2\right)w+w=f&\hbox{ in }\O,\\
w=0&\hbox{ on }\partial\O,\\
\left(n\cdot(\partial_xw,x^2\partial_y w)\right)\frac{\partial w}{\partial n}=2\Lambda&\hbox{ on }\partial\O.
\end{array}
\end{cases}
\ee
The characterization of the solutions of \eqref{eqcc} is an open problem even in the case $f=1$.
\end{exam}

\section{Spectral optimization for metric graphs}\label{s3}
In this section we study the problem of the optimization of the torsion rigidity of a one dimensional structure in $\R^d$ connecting a prescribed set of fixed points. Before we introduce the optimization problem we will examine some of the basic tools from the analysis of one dimensional sets.

Consider a closed connected set $C\subset\R^d$ of finite length $\H^1(C)<\infty$, where by $\H^1$ we denote the one-dimensional Hausdorff measure in $\R^d$. The natural choice of a distance on $C$ is 
$$d_C(x,y)=\inf\left\{\int_0^1|\dot\gamma(t)|\,dt\ :\ \gamma:[0,1]\to\R^d\hbox{ Lipschitz, }\g([0,1])\subset C,\ \g(0)=x,\ \g(1)=y\right\},$$
which, in turn, gives a pointwise definition of a gradient 
$$|u'|(x)=\limsup_{y\to x}\frac{|u(y)-u(x)|}{d(x,y)},$$
which is a function in $L^2(\H^1)$, at least in the case when $u:C\to\R$ is Lipschitz with respect to the distance $d_C$. For any function $u:C\to\R$, Lipschitz with respect to the distance $d_C$, we define the norm
$$\|u\|^2_{H^1(C)}=\int_C u^2\,d\H^1+\int_C |u'|^2\,d\H^1,$$
and the Sobolev space $H^1(C)$, as the closure of the Lipschitz functions on $C$ with respect to this norm.  
By the Second Rectifiability Theorem (see \cite[Theorem 4.4.8]{at}) the set $C$ consists of a countable family of injective arc-length parametrized Lipschitz curves $\g_i:[0,l_i]\to C$, $i\in\N$, i.e. there is an  $\H^1$-negligible set $N\subset C$ such that $C=N\cup\left(\cup_i\,\gamma_i([0,l_i])\right)$. On each curve $\gamma_i$ we have the chain rule $\Big|\frac{d}{dt}u(\g_i(t))\Big|=|u'|(\g_i(t))$ (see \cite[Lemma 3.1]{buruve} for a proof) and thus, we obtain the following expression for the norm of $u\in H^1(C)$:
\be\label{norm}
\|u\|^2_{H^1(C)}=\int_C u^2\,d\H^1+\sum_i\int_0^{l_i}\left|\frac{d}{dt}u(\gamma_i(t))\right|^2\,dt.
\ee

Given a set of distinct points $D_1,\dots,D_k\in\R^d$ we define the admissible class $\A_C(D_1,\dots,D_k)$ as the family of closed connected sets $C\subset\R^d$ containing $D_1,\dots,D_k$. For any $C\in \A_C(D_1,\dots,D_k)$ we consider the space of Sobolev functions which satisfy a Dirichlet condition at the points $D_i$:
$$H^1_0(C;D_1,\dots,D_k)=\{u\in H^1(C)\ :\ u(D_j)=0,\ j=1\dots,k\}.$$
For the points $D_j$ we use the term \emph{Dirichlet points}. The \emph{Dirichlet Energy} of the set $C$ with respect to $D_1,\dots,D_k$ is defined as
\be\label{energy1}
\E(C;D_1,\dots,D_k)=\min_{u\in H^1_0(C;D_1,\dots,D_k)}\frac12\int_C|u'|^2\,d\H^1-\int_C u\,d\H^1.
\ee

We study the following shape optimization problem:
\be\label{minenergy1}
\min\left\{\E(C;D_1,\dots,D_k)\ :\ C\in\A_C(D_1,\dots,D_k),\ \H^1(C)\le l\right\}.
\ee
\begin{oss}\label{mainoss}
We note that the admissible sets $C$ can be reduced to the set of graphs embedded in $\R^d$. For sake of simplicity, we limit ourselves to the case of three points $D_1,D_2,D_3\in\R^d$ (for the general result see \cite{buruve}). Let $C\in\A_C(D_1,D_2,D_3)$ be such that $\H^1(C)\le l$ and let $\eta:[0,a]\to C$ be a geodesic in $C$ connecting $D_1$ to $D_2$ which we suppose that do not pass through $D_3$. Let $\xi:[0,b]\to C$ be a geodesic in $C$ connecting $D_3$ to $D_1$ and let $l_3\in[0,b]$ be the smallest real number such that $\xi(l_3)\in\eta([0,a])$. We define 
$$\gamma_1=\eta_{|[0,l_1]},\ \gamma_2=\eta(d_C(D_1,D_2)-\cdot)_{|[0,l_2]},\ \gamma_3=\xi_{|[0,l_3]},$$
where $l_1$ and $l_2$ are such that $\eta(l_1)=\xi(l_3)$ and $l_2=d_C(D_1,D_2)-l_1$.


\begin{figure}[h]
\centerline{\includegraphics[width=6.5cm]{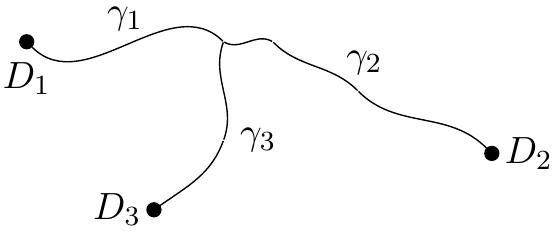}
\includegraphics[width=6.5cm]{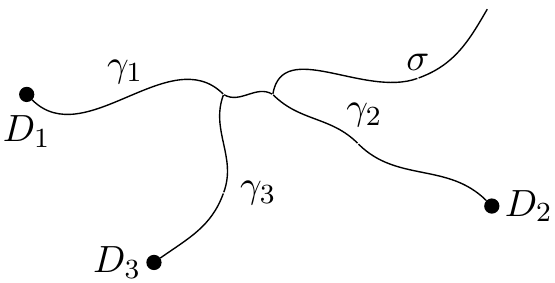}}
\caption{The set $C'$ (on the left) and $\widetilde C$ (on the right).}
\label{f1th1}
\end{figure}

The curves $\gamma_1$, $\gamma_2$ and $\gamma_3$ are geodesics in $C$ which does not intersect each other in internal points (note that it is possible that one of them is degenerate, i.e. constant). Consider the set $C'=\cup_i\,\gamma_i([0,l_i])\subset C$. By construction $C'$ is connected and contains $D_1,D_2$ and $D_3$. Let $w\in H^1_0(C;D_1,D_2,D_3)$ be a positive function and let $v:[0,\H^1(C\setminus C')]\to\R$ be a monotone increasing function such that $|\{v\le\tau\}|=\H^1(\{w\le\tau\}\cap\Gamma)$. By the Polya-Szeg\"o inequality (see \cite[Remark 2.6]{buruve} or \cite{friedlander}), we have 
\be\label{e1t1}
\frac12\int_0^{\H^1(\Gamma)}|v'|^2\,dx-\int_0^{\H^1(\Gamma)}v\,dx
\le\frac12\int_{\Gamma}|w'|^2\,d\H^1-\int_{\Gamma}w\,d\H^1.
\ee

Let $\sigma:[0,\H^1(C\setminus C')]\to\R^d$ be an injective arc-length parametrized curve such that $Im(\sigma)\cap C'=\sigma(0)=x'$, where $x'\in C'$ is the point where $w_{|C'}$ achieves its maximum. Then the closed connected set $\widetilde C=C'\cup\sigma([0,\H^1(C\setminus C')])$ is admissible and has lower energy than $C$. In particular, in problem \eqref{minenergy1} with three fixed points, we can restrict our attention to sets, which are representations of metric graphs (i.e. combinatorial graphs with weighted edges) in $\R^d$. More precisely, we can consider graphs $C$ such that  
\begin{enumerate}
\item $C$ is a tree, i.e. it does not contain any closed loop;
\item $C$ has at most $6$ vertices; if a vertex has degree three or more, we call it Kirchhoff point;
\item there is at most one vertex of degree one for $C$ which is not a Dirichlet point. In this vertex the energy function $w$ satisfies Neumann boundary condition $w'=0$ and so we call it Neumann point.
\end{enumerate}
\end{oss}
In the setting described above, the topology on the set of admissible graphs is quite natural, i.e. we say that $C_n$ converges to $C$, if the weighted connectivity matrices of the graphs $C_n$ converge to that of $C$, where the element $m_{ij}$ of the connectivity matrix $M=(m_{ij})_{ij}$ is equal to the length of the edge connecting the two vertices $V_i$ and $V_j$ with the convention that $m_{ij}=+\infty$ if the there is no edge connecting the two vertices and $m_{ij}=0$, if the two vertices coincide. It is quite clear that with this topology the set of connected metric trees of at most $N$ vertices is compact. On the other hand, as the following example shows, the energy $\E(C,\D)$ is not semi-continuous.
\begin{exam}
Consider the points $D_1=(0,0)$, $D_2=(1,0)$ and $D_3=(2,0)$ and the set $C_n\subset\R^2$ consisting of the graphs of the functions $y(x)=x(x-1)$ for $x\in[0,1]$ and $y_n(x)=-\frac{1}{n}x(x-2)$ for $x\in[0,2]$. Passing to the limit as $n\to\infty$, we have that the arc connecting $D_1$ to $D_3$ passes through the Dirichlet point $D_2$ which causes the energy to suddenly increase.  
\end{exam}
\begin{oss}
The lack of semi-continuity does not necessarily imply the non-existence of a solution of \eqref{minenergy1}, but suggests the nature of a possible counter-example. Following this idea, in \cite{buruve}, was proved that if $\D=\{D_1, D_2, D_3\}\subset\R^2$ is a set of points, with coordinates respectively $(-1,0)$, $(1,0)$ and $(n,0)$, and $l=n+2$ is a given length, then, for $n$ large enough, the problem \eqref{minenergy1} does not have a solution. 
\end{oss}

In order to obtain an existence result for the problem \eqref{minenergy1}, we consider, as in \cite{buruve}, in a larger class of admissible sets.  Indeed, let $\Gamma$ be a combinatorial graph with vertices $\{V_i\}_{i=1,\dots,N}$ and edges $\{e_{ij}\}_{ij}$. We call $\Gamma$ a metric graph, if to each edge $e_{ij}$ is associated a positive real number $l_{ij}$ which we interpret as the length of the edge. Thus, the total length of $\Gamma$ is given by $l(\Gamma):=\sum_{i<j}l_{ij}$.

A function $u:\Gamma\rightarrow\R^n$ on the metric graph $\Gamma$ is a collection of functions $u_{ij}:[0,l_{ij}]\rightarrow\R$, for $1\le i\neq j\le N$, such that:
\begin{enumerate}
\item $u_{ji}(x)=u_{ij}(l_{ij}-x)$, for each $1\le i\ne j\le N$,
\item $u_{ij}(0)=u_{ik}(0)$, for all $\{i,j,k\}\subset\{1,\dots,N\}$.
\end{enumerate}
We say that $u$ is continuous ($u\in C(\Gamma)$), square integrable $u\in L^2(\Gamma)$ or Sobolev $u\in H^1(\Gamma)$, if $u_{ij}$ is respectively continuous, square integrable or Sobolev on each edge $e_{ij}$. We also note that, if $u\in H^1(\Gamma)$, then $|u'|\in L^2(\Gamma)$ and so, we can define
\be\label{engamma}
E(\Gamma;\{V_1,\dots, V_k\})=\min_{u\in H^1_0(\Gamma;\{V_1,\dots, V_k\})}\frac12\int_{\Gamma}|u'|^2\,d\H^1-\int_{\Gamma}u\,d\H^1, 
\ee 
where $H^1_0(\Gamma;\{V_1,\dots, V_k\})$ indicates the subspace of $ H^1(\Gamma)$ of the functions vanishing on each of the vertices $V_1,\dots,V_k$ and we also used the notation
$$\int_{\Gamma}|u'|^2\,d\H^1:=\sum_{ij}\int_0^{l_{ij}}|u_{ij}'|^2\,dx,\qquad\int_{\Gamma}u\,d\H^1:=\sum_{ij}\int_0^{l_{ij}}u_{ij}\,dx.$$

We say that the continuous function $\gamma=(\g_{ij})_{1\le i\ne j\le N}:\Gamma\to\R^d$ is an \emph{immersion} of the metric graph $\Gamma$ into $\R^d$, if for each $1\le i\ne j\le N$ the function $\g_{ij}:[0,l_{ij}]\to\R^d$ is an injective arc-length parametrized curve. Given a set of distinct points $D_1,\dots,D_k\in\R^d$, we define the admissible set $\A(D_1,\dots,D_k)$ as the set of metric graphs $\Gamma$ for which there is an immersion $\gamma:\Gamma\to\R^d$ such that $\gamma(V_i)=D_i$, where $V_1,\dots,V_k$ are vertices of $\Gamma$. In \cite{buruve} the following result was proved.

\begin{teo}\label{graphth}
Consider a set of distinct points $D_1,\dots,D_k\in\R^d$ and a real number $l$ such that there is a closed set $C\subset\R^d$ which contains $D_1,\dots,D_k$ and such that $\HH^1(C)\le l$. Then the following problem has a solution:
\be\label{enim2}
\min\Big\{E(\Gamma;\{V_1,\dots, V_k\})\ :\ \Gamma\in\A(D_1,\dots,D_k),\ l(\Gamma)\le l\Big\}.
\ee
\end{teo}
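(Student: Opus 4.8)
The plan is to use the direct method in the class of metric graphs. First I would fix a minimizing sequence $\Gamma_n\in\A(D_1,\dots,D_k)$ with $l(\Gamma_n)\le l$; without loss of generality we may assume that each $\Gamma_n$ has no vertices of degree two (absorbing them into edges) and no ``pendant'' edges that do not carry the energy function other than at most the Neumann structure, so that by a combinatorial argument analogous to Remark \ref{mainoss} the number of vertices $N_n$ is bounded by a constant $N=N(k)$ independent of $n$. Having fixed the vertex count, I would record each $\Gamma_n$ by its weighted connectivity matrix $M_n=(l_{ij}^n)_{ij}$, a point in a compact subset of $\big([0,l]\cup\{+\infty\}\big)^{N\times N}$ (lengths bounded by $l$, entries $+\infty$ allowed for absent edges). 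Up to a subsequence $M_n\to M$, and $M$ is the connectivity matrix of a limiting metric graph $\Gamma$ (some edges may have shrunk to length $0$, identifying their endpoints, and the result is still a connected metric graph containing the marked vertices $V_1,\dots,V_k$, with $l(\Gamma)\le l$). This gives compactness of the admissible class; the key point making it work is that, unlike the embedded sets $C\subset\R^d$ of \eqref{minenergy1}, we have \emph{forgotten the embedding} and only retained the combinatorial-metric data, so no bad degeneration in $\R^d$ (arcs colliding onto a Dirichlet point, as in the Example) can occur — the embedding $\gamma:\Gamma\to\R^d$ witnessing admissibility of $\Gamma$ is reconstructed at the end simply by taking any immersion extending the prescribed $\gamma(V_i)=D_i$, which exists because the points $D_i$ are distinct and the edges have positive length.

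Next I would establish lower semicontinuity of the energy: $E(\Gamma;\{V_1,\dots,V_k\})\le\liminf_n E(\Gamma_n;\{V_1,\dots,V_k\})$. Let $w_n\in H^1_0(\Gamma_n)$ be the energy (torsion) function on $\Gamma_n$, i.e. the minimizer in \eqref{engamma}; testing with $u=0$ gives $E(\Gamma_n)\le0$, and a Poincaré inequality on $\Gamma_n$ (with constant controlled by $l$ and the fact that $w_n$ vanishes at $V_1,\dots,V_k$) bounds $\|w_n\|_{H^1(\Gamma_n)}$ uniformly. Parametrizing edge by edge, along each edge $e_{ij}$ of the limiting graph $\Gamma$ the functions $w_n$ restricted to the corresponding edges of $\Gamma_n$ (whose lengths $l_{ij}^n\to l_{ij}$) are bounded in $H^1(0,l_{ij}^n)$, hence up to a subsequence converge weakly in $H^1$ and uniformly to some $w_{ij}$ on $[0,l_{ij}]$; edges of $\Gamma_n$ whose length tends to $0$ contribute a vanishing amount of energy and force the limiting values at the two (now identified) endpoints to agree, so the collection $w=(w_{ij})$ is continuous across vertices and vanishes at $V_1,\dots,V_k$, i.e. $w\in H^1_0(\Gamma;\{V_1,\dots,V_k\})$. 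By weak lower semicontinuity of $\int|u'|^2$ on each edge and strong (uniform) convergence for the linear term $\int u\,d\H^1$, we get
$$\frac12\int_\Gamma|w'|^2\,d\H^1-\int_\Gamma w\,d\H^1\le\liminf_{n\to\infty}\Big(\frac12\int_{\Gamma_n}|w_n'|^2\,d\H^1-\int_{\Gamma_n}w_n\,d\H^1\Big)=\lim_{n\to\infty}E(\Gamma_n).$$
Since the left-hand side is at least $E(\Gamma;\{V_1,\dots,V_k\})$, combining with $l(\Gamma)\le l$ and $\Gamma\in\A(D_1,\dots,D_k)$ shows $\Gamma$ is a minimizer, proving the theorem.

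The main obstacle, and the step deserving the most care, is the combinatorial a priori bound on the number of vertices of a minimizing sequence: without it the connectivity matrices live in a space of unbounded dimension and no compactness is available. The argument is the one indicated in Remark \ref{mainoss} — given any competitor one may (i) delete loops, decreasing $l(\Gamma)$ and, by a Polya–Szegő–type rearrangement as in \eqref{e1t1}, not increasing the energy, so optimal graphs are trees; (ii) note a tree spanning $k$ Dirichlet points with all internal branch (Kirchhoff) vertices of degree $\ge3$ and at most one extra Neumann leaf has a number of vertices bounded in terms of $k$ alone; (iii) any leftover length can be reattached as a single pendant segment at the maximum point of $w$ without raising the energy. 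One must check that these reductions can be performed \emph{uniformly along the minimizing sequence} and are compatible with the convergence of connectivity matrices, in particular that the rearrangement step does not increase $l(\Gamma)$. Granting Remark \ref{mainoss} (which is asserted in the excerpt with reference to \cite{buruve}), the remaining compactness and semicontinuity arguments are routine one-dimensional analysis.
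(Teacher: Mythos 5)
Your overall route --- forget the embedding, retain only the combinatorial--metric data, reduce a minimizing sequence to trees with at most $N(k)$ vertices, extract a limit of the weighted connectivity matrices, and prove lower semicontinuity of the energy edge by edge (with collapsing edges handled via the uniform $H^1$ bound and the resulting identification of endpoint values) --- is exactly the strategy the paper indicates and the one followed in \cite{buruve}, to which the paper defers the proof. The Poincar\'e bound on the torsion functions, the treatment of vanishing edges, and the semicontinuity argument are all correct, and you rightly flag the vertex-count reduction as the step requiring the most care.

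There is, however, one genuine gap, and it sits precisely where the relaxation earns its keep: the closedness of the class $\A(D_1,\dots,D_k)$ under your convergence. You justify admissibility of the limit graph $\Gamma$ by saying that an immersion with $\gamma(V_i)=D_i$ ``exists because the points $D_i$ are distinct and the edges have positive length''. That is not a valid reason: a metric graph consisting of a single edge of length $1$ joining $V_1$ to $V_2$ admits no immersion sending $V_1\mapsto D_1$, $V_2\mapsto D_2$ when $|D_1-D_2|>1$, even though $D_1\ne D_2$ and the edge has positive length. An immersion exists if and only if the vertices can be placed at points $x_i\in\R^d$ with $x_i=D_i$ for $i\le k$ and $0<|x_i-x_j|\le l_{ij}$ for every edge $e_{ij}$, and these compatibility conditions must be shown to survive the limit. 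The fix is to carry the immersions $\gamma_n$ along the sequence: the images $\gamma_n(V_i)$ lie in a bounded set (each $\Gamma_n$ is connected, contains $D_1$ in its image and has total length at most $l$), so up to a further subsequence they converge to points $x_i$ with $|x_i-x_j|\le\lim_n l^n_{ij}=l_{ij}$ on every surviving edge; two Dirichlet vertices are never identified, since any immersed path joining them has length at least $|D_i-D_j|>0$; and a coincidence $x_i=x_j$ across an edge of positive limit length involves a free vertex whose position can be perturbed (using that $\Gamma$ is a tree, so one side of the edge can be moved rigidly). Without some version of this argument your proof does not distinguish the relaxed problem \eqref{enim2} from the embedded problem \eqref{minenergy1}, for which the analogous closedness is exactly what fails in the paper's counterexample. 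The remaining deferred ingredient, the uniform bound $N(k)$ via the reduction of Remark \ref{mainoss}, is legitimately outsourced to \cite{buruve}, consistently with what the paper itself does.
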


In some situations, we can use Theorem \ref{graphth} to obtain an existence result for \eqref{minenergy1}.

\begin{prop}
Suppose that $D_1$, $D_2$ and $D_3$ be three distinct, non co-linear points in $\R^d$ and let $l>0$ be a real number such that there exists a closed set of length $l$ connecting $D_1$, $D_2$ and $D_3$. Then the problem \eqref{minenergy1} has a solution.
\end{prop}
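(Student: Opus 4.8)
The plan is to reduce the problem over closed connected sets in \eqref{minenergy1} to the problem over metric graph immersions in \eqref{enim2}, for which Theorem \ref{graphth} already gives a solution, and then to show that the solution of \eqref{enim2} can be realized as an admissible competitor in \eqref{minenergy1} achieving the same energy. First I would invoke Theorem \ref{graphth} to obtain a metric graph $\Gamma_0\in\A(D_1,D_2,D_3)$ with $l(\Gamma_0)\le l$ minimizing $E(\Gamma;\{V_1,V_2,V_3\})$, together with an immersion $\g_0:\Gamma_0\to\R^d$ such that $\g_0(V_i)=D_i$. The key point is that $\inf\eqref{minenergy1}=\min\eqref{enim2}$: the inequality $\inf\eqref{minenergy1}\ge\min\eqref{enim2}$ follows because, by Remark \ref{mainoss}, any admissible $C$ for \eqref{minenergy1} can be replaced by a tree-like graph $\widetilde C$ of lower energy which is itself (the image of) an immersed metric graph with the right length bound, hence an admissible competitor for \eqref{enim2} with $E(\widetilde C)\le\E(C;\D)$; and the reverse inequality $\inf\eqref{minenergy1}\le\min\eqref{enim2}$ would follow if every immersed metric graph of length $\le l$ has image a closed connected set of length $\le l$ containing the $D_i$ — which is immediate since the image of a finite union of compact arcs is compact and connected, and $\H^1$ of the image is at most the sum of the arc lengths.

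The delicate point — and the only place the non-colinearity hypothesis enters — is that the image $\g_0(\Gamma_0)\subset\R^d$ is a legitimate competitor for \eqref{minenergy1}, i.e.\ that its intrinsic Sobolev energy $\E(\g_0(\Gamma_0);\D)$ (defined via the geodesic distance $d_C$ on the image) does not exceed the combinatorial energy $E(\Gamma_0;\{V_1,V_2,V_3\})$. Equality can fail if the immersion $\g_0$ identifies points of distinct edges, creating extra connections that lower the intrinsic energy relative to the abstract graph — but that only helps us, since it can only decrease $\E(\g_0(\Gamma_0);\D)$. The genuine worry is the opposite: overlaps could force $\H^1(\g_0(\Gamma_0))<l(\Gamma_0)\le l$ while the intrinsic energy on the (smaller) image could a priori be larger than $E(\Gamma_0)$ because identified interior points act like Kirchhoff junctions. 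I would argue that after passing to $\widetilde C$ as in Remark \ref{mainoss} the optimal configuration is a tree with a controlled number of vertices, and that for a tree an immersion realizing the prescribed edge lengths with pairwise non-overlapping edges exists precisely because the three Dirichlet points are affinely independent: there is enough room in $\R^d$ (here $d\ge2$ suffices) to embed a finite tree isometrically with prescribed edge lengths without self-intersections, so that the image has length exactly $l(\Gamma_0)$ and intrinsic geodesic distance equal to the combinatorial distance; hence $\E(\g_0(\Gamma_0);\D)=E(\Gamma_0)$ and $\g_0(\Gamma_0)$ is admissible for \eqref{minenergy1}.

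Putting the pieces together: $\g_0(\Gamma_0)$ is admissible for \eqref{minenergy1} with $\E(\g_0(\Gamma_0);\D)=E(\Gamma_0)=\min\eqref{enim2}=\inf\eqref{minenergy1}$, so it is a minimizer and the infimum in \eqref{minenergy1} is attained. The main obstacle I anticipate is the embedding step in the previous paragraph — verifying that the abstract optimal tree $\Gamma_0$ admits a non-self-intersecting immersion into $\R^d$ with the prescribed edge lengths and with $\g_0(V_i)=D_i$; this is where non-colinearity of $D_1,D_2,D_3$ is essential (a colinear configuration can force the minimizing graph to be a segment, for which the realization and the energy comparison are trivial, but in intermediate length regimes colinearity can obstruct a clean embedding and the image's geodesic structure may genuinely differ from the abstract graph's). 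Everything else — compactness of the graph class, continuity/semicontinuity of $E(\cdot)$ along the convergence of connectivity matrices, and the Polya–Szegő rearrangement bounds — is supplied by Theorem \ref{graphth} and Remark \ref{mainoss}.
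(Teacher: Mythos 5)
Your proposal is correct and follows essentially the same route as the paper: take the optimal metric graph from Theorem \ref{graphth}, reduce it via the reasoning of Remark \ref{mainoss} to a small tree plus one free (Neumann) edge, and use the non-colinearity of $D_1,D_2,D_3$ to realize this tree as an embedded set in $\R^d$ whose intrinsic energy coincides with the combinatorial one, so that the image attains $\inf\eqref{minenergy1}=\min\eqref{enim2}$. Your identification of the genuine danger (interior identifications under the immersion add continuity constraints and can only raise the intrinsic energy, so one must avoid self-intersections rather than tolerate them) and of the embedding step as the place where non-colinearity enters matches the paper's argument, which handles it by noting there are only two possible tree topologies, both embeddable.
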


\begin{proof}
Let the graph $\Gamma$ be a solution of \eqref{enim2} and let $\gamma:\Gamma\to\R^d$ be an immersion of $\Gamma$ such that $\gamma(V_j)=D_j$ for $j=1,2,3$. Note that if the immersion $\gamma$ is such that the set $\gamma(\Gamma)\subset\R^d$ is represented by the same graph $\Gamma$, then $\gamma(\Gamma)$ is a solution of \eqref{minenergy1} since we have 
$$E(\Gamma;\{V_1,V_2,V_3\})=E(C;D_1,D_2,D_3).$$
Reasoning as in Remark \ref{mainoss}, we can suppose that $\Gamma$ is obtained by a tree $\Gamma'$ with vertices $V_1$, $V_2$ and $V_3$ by attaching a new edge (with a new vertex in one of the extrema) to some vertex or edge of $\Gamma'$. Since we are free to choose the immersion of the new edge, we only need to show that we can choose $\gamma$ in order to have that the set $\gamma(\Gamma')$ is represented by $\Gamma'$. On the other hand we have only two possibilities for $\Gamma'$ and both of them can be seen as embedded graphs in $\R^d$ with vertices $D_1,D_2$ and $D_3$.
\end{proof}

\begin{oss}
Similarly to the existence proof of a classical optimal graph of Proposition above we believe that a more general result should hold: if $D_1,\dots, D_k$ are $k$ distinct points in $\R^d$ such that none of them can be expressed as a convex combination of the others, then \eqref{minenergy1} has a solution. We do not yet have a complete proof of this fact.
\end{oss}

\begin{exam}\label{example2}
Let $D_1$ and $D_2$ be two distinct points in $\R^d$ and let $l\ge|D_1-D_2|$ be a real number. Then the optimization problem \eqref{enim2} has a solution $\Gamma$ which is actually a classical graph $C$ given by the connected set (see Figure \ref{f1ex2})
$$C=[D_1,D_2]\cup\left[\frac{D_1+D_2}{2},D_3\right]\qquad\hbox{with }\left|D_3-\frac{D_1+D_2}{2}\right|=l-|D_1-D_2|.$$

\begin{figure}[h]
\centerline{\includegraphics[width=14.0cm]{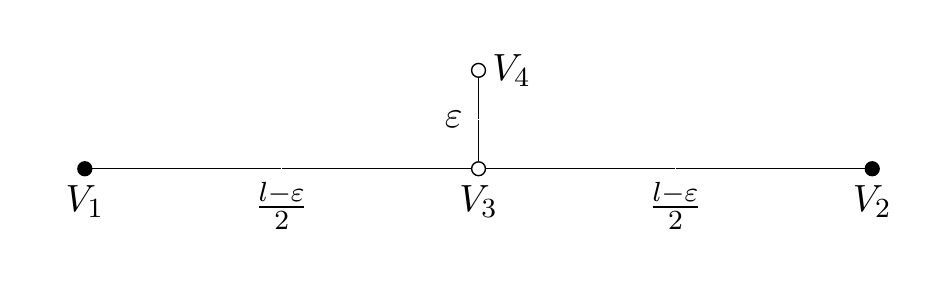}}
\caption{The optimal graph with two Dirichlet points.}
\label{f1ex2}
\end{figure}
\end{exam}

\begin{exam}\label{example3}
Let $D_1$, $D_2$ and $D_3$ be the vertices of an equilateral triangle of side $1$ in $\R^2$, i.e.
$$D_1=\left(-\frac{\sqrt3}{3},0\right),\ D_2=\left(\frac{\sqrt3}{6},-\frac12\right),\ D_3=\left(\frac{\sqrt3}{6},\frac12\right).$$
We study in \cite{buruve} the problem \eqref{minenergy1} with $\D=\{D_1,D_2,D_3\}$ and $l>\sqrt 3$. We show that the solutions may have different qualitative properties for different $l$ and that there is always a symmetry breaking phenomenon, i.e. the solutions do not have the same symmetries as the initial configuration $\D$. Indeed, an explicit estimate of the energy shows that (see Figure \ref{f2ex3}):

\begin{enumerate}
\item if $\sqrt3\le l\le1+\sqrt3/2$, we have that the solution of the problem \eqref{minenergy1} with $\D=\{D_1,D_2,D_3\}$ is of the form $\Gamma_1$;
\item if $l>1+\sqrt3/2$, then the solution of the problem \eqref{enim2} with $\D=\{D_1,D_2,D_3\}$ is of the form $\Gamma_3$.
\end{enumerate}
\end{exam}

\begin{figure}[h]
\centerline{\includegraphics[width=9.0cm]{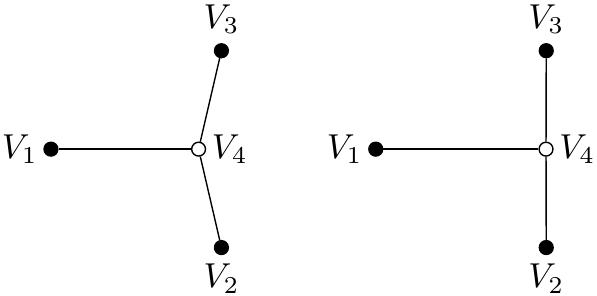}}
\centerline{\includegraphics[width=9.0cm]{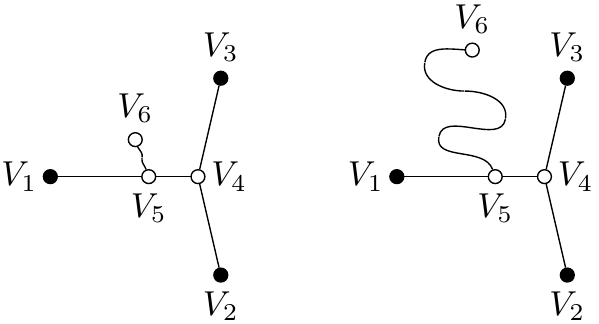}}
\caption{The optimal graphs for $l<1+\sqrt{3}/2$, $l=1+\sqrt3/2$, $l>1+\sqrt3/2$ and $l>>1+\sqrt3/2$.}
\label{f2ex3}
\end{figure}

\section{Spectral optimization for Schr\"odinger operators}\label{s4}

Consider a bounded open set $\O\subset\R^d$ and a function $f\in L^2(\O)$. The Dirichlet energy related to a potential $V\ge0$ on $\O$ is defined as
\be\label{enV}
\E_f(V)=\min_{u\in H^1_0(\O)}\frac12\int_\O|\nabla u|^2\,dx+\frac12\int_\O u^2V\,dx-\int_\O uf\,dx.
\ee
A natural question, analogous to the problems considered in Section \ref{s2}, is the optimization of $\E_f(V)$ under some integral constraint on $V$, of the form $\int V^p\,dx\le1$. It is clear, from the definition of $\E_f$, that for $p>0$ the minimum is achieved by $V=0$. On the contrary, maximizing the energy under the same constraints gives the following results.

\begin{itemize}
\item If $p<1$ a maximizing potential does not exist. In fact, for any $p<1$, one may construct a sequence of functionals $V_n$ such that $\int V_n^p\,dx=1$ and $\E_f(V_n)\to 0$, as $n\to\infty$.

\item If $p>1$ the optimal potential $V_p$ exists and is given by
$$V_p=|u|^{2/(p-1)}\cdot\Big(\int_\O|u|^{2p/(p-1)}\,dx\Big)^{-1/p}$$
where $u$ is the solution of the minimum problem
$$\min_{u\in H^1_0(\O)}\frac12 \int_\O |\nabla u|^2\,dx+\frac12\left(\int_\O|u|^{2p/(p-1)}\,dx\right)^{(p-1)/p}-\int_\O uf\,dx,$$
which is also the strong solution of $-\Delta u+uV_p=f$ in $\O$.

\item If $p=1$ the optimal potential $V_1$ exists and is given by
$$V_1=\frac fM\left(1_{\omega_+}-1_{\omega_-}\right),$$
where $M=\|u_1\|_{L^\infty(\O)}$, $\omega_+=\{u_1=M\}$, $\omega_-=\{u_1=-M\}$, and $u_1\in H^1_0(\O)\cap H^2(\O)$ is the unique minimizer of the functional $J_1:L^2(\O)\to\R$, defined as
$$J_1(u):=\frac12 \int_\O |\nabla u|^2\,dx+\frac12\|u\|_{L^\infty(\O)}^2-\int_\O uf\,dx.$$
In particular, we have
$$\int_{\omega_+}f\,dx-\int_{\omega_-}f\,dx=M,\qquad f\ge0\hbox{ on }\omega_+,\qquad f\le0\hbox{ on }\omega_-\;.$$
\end{itemize}
 
\begin{exam}
Let $\O=(-1,1)$ and $f$ be a positive constant on $\O$. Then $u_1$ is positive and, by a symmetrization argument, it is also radially symmetric and decreasing. Thus, $\omega_+=(-a,a)$ for some $a\in (0,1)$ and since $|\omega_+|M=1$, we have that $a=\frac{1}{2M}$. Since $u'(\frac{1}{2M})=0$ and $u''=-f$ on $(\frac{1}{2M},1)$, we have that $(1-\frac{1}{2M})^2 f=2M$, which uniquely determines $M$ and so, the optimal potential $V_1=\frac{1}{M}1_{(-\frac{1}{2M},\frac{1}{2M})}$.
\end{exam}

When $p<0$ the minimization problem
\be\label{minpb}
\min\left\{\E_f(V)\ :\ V:\O\to[0,+\infty],\ \int_\O V^p\,dx=1\right\},
\ee
becomes meaningful.

\begin{prop}\label{min1}
Let $\O\subset\R^d$ be a bounded open set and let $f\in L^2(\O)$. Then, for every $p<0$, the problem \eqref{minpb} has a solution.
\end{prop}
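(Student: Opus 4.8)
The plan is to eliminate the potential $V$ in favour of the state function $u\in H^1_0(\O)$, reducing \eqref{minpb} to a standard variational problem, to solve the reduced problem by the direct method, and then to read off the optimal potential. For $u\in H^1_0(\O)$ and a measurable $V:\O\to[0,+\infty]$ write
$$G(u,V)=\frac12\int_\O|\nabla u|^2\,dx+\frac12\int_\O u^2V\,dx-\int_\O uf\,dx,$$
so that $\E_f(V)=\inf_u G(u,V)$; interchanging the two infima (both equal $\inf_{(u,V)}G$) gives
$$\min\Big\{\E_f(V):V\ge0,\ \int_\O V^p\,dx=1\Big\}=\inf_{u\in H^1_0(\O)}\Big(\frac12\int_\O|\nabla u|^2\,dx-\int_\O uf\,dx+\frac12\Phi(u)\Big),$$
where $\Phi(u)=\inf\{\int_\O u^2V\,dx:V\ge0,\ \int_\O V^p\,dx=1\}$. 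First I would compute $\Phi$. Put $r:=2p/(p-1)$, so $r\in(0,2)$ for $p<0$, and note that $\tfrac{p-1}{p}$ and $1-p$ are conjugate exponents, both $>1$. Since $(Vu^2)^{p/(p-1)}(V^p)^{1/(1-p)}=|u|^{r}$, H\"older's inequality yields
$$\int_\O|u|^{r}\,dx\le\Big(\int_\O u^2V\,dx\Big)^{p/(p-1)}\Big(\int_\O V^p\,dx\Big)^{1/(1-p)},$$
so under the constraint $\int_\O V^p\,dx=1$ one gets $\int_\O u^2V\,dx\ge\|u\|_{L^{r}(\O)}^{2}$, with equality for $V=\|u\|_{L^r(\O)}^{-2/(p-1)}|u|^{2/(p-1)}$ (set equal to $+\infty$ on $\{u=0\}$; since the exponent $2p/(p-1)$ is positive, this $V$ is admissible with $\int_\O V^p\,dx=1$). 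Hence $\Phi(u)=\|u\|_{L^{r}(\O)}^{2}$, and \eqref{minpb} is equivalent to minimising over $H^1_0(\O)$ the functional
$$J(u)=\frac12\int_\O|\nabla u|^2\,dx+\frac12\|u\|_{L^{r}(\O)}^{2}-\int_\O uf\,dx.$$

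Next I would solve $\min_{H^1_0(\O)}J$ by the direct method. By the Poincar\'e inequality $J(u)\ge\tfrac12\|\nabla u\|_{L^2}^2-C\|f\|_{L^2}\|\nabla u\|_{L^2}$, so $J$ is bounded below and coercive on $H^1_0(\O)$; a minimising sequence $u_n$ is therefore bounded, and up to a subsequence $u_n\rightharpoonup u$ in $H^1_0(\O)$ and $u_n\to u$ in $L^2(\O)$ by Rellich--Kondrachov. Since $r<2$ and $\O$ is bounded, $L^2(\O)\hookrightarrow L^{r}(\O)$, whence $\|u_n\|_{L^r}\to\|u\|_{L^r}$; also $\int_\O u_nf\to\int_\O uf$ and $\int_\O|\nabla u|^2\le\liminf_n\int_\O|\nabla u_n|^2$. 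Thus $J(u)\le\liminf_n J(u_n)$ and $u$ is a minimiser of $J$.

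Finally I would recover the optimal potential. If $f\equiv0$ then $\E_f\equiv0$ and every admissible $V$ — for instance the constant $|\O|^{-1/p}$ — solves \eqref{minpb}; so assume $f\not\equiv0$. Choosing $v\in C^\infty_c(\O)$ with $\int_\O fv\,dx>0$ gives $J(tv)<0$ for small $t>0$, so the minimiser $u_0$ is not identically zero and $\|u_0\|_{L^{r}(\O)}>0$. Set $V_p=\|u_0\|_{L^{r}(\O)}^{-2/(p-1)}|u_0|^{2/(p-1)}$ (equal to $+\infty$ on $\{u_0=0\}$). Then $\int_\O V_p^p\,dx=1$ and $\int_\O u_0^2V_p\,dx=\|u_0\|_{L^{r}(\O)}^{2}$, so $\E_f(V_p)\le G(u_0,V_p)=J(u_0)=\min_{H^1_0(\O)}J$, which by the reduction equals the infimum in \eqref{minpb}; since $V_p$ is admissible, it attains that infimum and is a solution.

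I expect the only genuinely delicate point to be the reduction in the first paragraph: one must handle extended-real-valued potentials and the set $\{u=0\}$ carefully, both when proving the H\"older identity $\Phi(u)=\|u\|_{L^{r}(\O)}^{2}$ and when checking that the extremal potential — which equals $+\infty$ on $\{u=0\}$ — still satisfies the constraint $\int_\O V^p\,dx=1$ (this uses $2p/(p-1)>0$, i.e. $r>0$). Once the problem is reduced to minimising $J$, the rest is routine, the key structural fact being that $r<2$, so that $u\mapsto\|u\|_{L^{r}(\O)}^{2}$ is a weakly continuous perturbation of the coercive Dirichlet functional.
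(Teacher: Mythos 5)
Your proposal is correct and follows the same route as the paper: interchange the two minimizations, identify $\inf\{\int_\O u^2V\,dx:\int_\O V^p\,dx=1\}=\|u\|_{L^{2p/(p-1)}}^2$ via H\"older, minimize the reduced functional \eqref{eqpneg} by the direct method using $0<2p/(p-1)<2$, and read off $V_p$ from \eqref{pneg}. The only difference is that you spell out the details (the H\"older identity, the coercivity and lower semicontinuity, the degenerate case $u_0\equiv0$) that the paper dismisses as straightforward.
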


\begin{proof}
By the definition of $\E_f(V)$, interchanging the two min operators, we find that the optimal potential $V_p$ is given by
\be\label{pneg}
V_p=|u|^{2/(p-1)}\cdot\Big(\int_\O|u|^{2p/(p-1)}\,dx\Big)^{-1/p}
\ee
where $u$ is the solution of the minimum problem
\be\label{eqpneg}
\min_{u\in H^1_0(\O)}\frac12 \int_\O |\nabla u|^2\,dx+\frac12\left(\int_\O|u|^{2p/(p-1)}\,dx\right)^{(p-1)/p}-\int_\O uf\,dx.
\ee
Note that, since $p<0$, the quantity $q=2p/(p-1)$ is such that $0<q<2$. The existence of a solution for problem \eqref{eqpneg} is straightforward, which gives the existence of the optimal potential $V_p$ through equality \eqref{pneg}.
\end{proof}

When we consider more general cost functionals $F(V)$, like for instance spectral costs depending on the eigenvalues of the Schr\"odinger operator $-\Delta+V$, the proof above cannot be repeated; nevertheless, using finer tools like $\gamma$-convergence for Dirichlet problems, the following more general result can be obtained (see \cite{bugeruve}).

\begin{teo}
Consider a cost functional $F:\B_+(\O)\to\R$, where $\B_+(\O)$ denotes the space of Borel measurable positive functions on $\O$. Suppose that $F$ is
\begin{enumerate}
\item increasing, i.e. $F(V)\ge F(W)$, whenever $V\ge W$;
\item lower semi-continuous with respect to strong convergence of the resolvents 
$$R_V=(-\Delta +V)^{-1}:L^2(\O)\to L^2(\O).$$
\end{enumerate} 
Then, for any $p<0$, the optimization problem
$$\min\left\{F(V)\ :\ V:\O\to[0,+\infty],\ \int_\O V^p\,dx=1\right\},$$
has a solution.
\end{teo}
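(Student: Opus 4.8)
The plan is to use the direct method of the calculus of variations in the spirit of Proposition~\ref{min1}, but replacing the ad hoc interchange-of-minima trick (which is unavailable for a general cost $F$) by a compactness argument on the sublevel sets of $V$ together with the lower semi-continuity hypothesis. First I would fix a minimizing sequence $V_n:\O\to[0,+\infty]$ with $\int_\O V_n^p\,dx=1$ and $F(V_n)\to m:=\inf F$. The crucial point, exactly as in the $\E_f$ case, is that $p<0$ forces $V_n$ to be \emph{large} on large portions of $\O$: the constraint $\int_\O V_n^p\,dx=1$ prevents $V_n$ from being too small on a set of positive measure, so passing to the limit does not lose mass in the ``wrong'' direction. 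To exploit this, I would associate to each $V_n$ the capacitary measure (in the sense of Dal Maso--Mosco $\gamma$-convergence for Dirichlet problems) $\mu_n=V_n\,dx$, and invoke the compactness of the class of nonnegative Borel measures for the $\gamma$-convergence, as used in \cite{bugeruve}: up to a subsequence, $\mu_n$ $\gamma$-converges to some capacitary measure $\mu$, equivalently $R_{V_n}\to R_\mu$ strongly in $L^2(\O)$.

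The second step is to identify $\mu$ with an admissible potential, i.e.\ to show $\mu=V\,dx$ for some measurable $V\ge0$ with $\int_\O V^p\,dx=1$, or at least $\le 1$. For the absolute continuity and the integrability of $V^p$ I would use a lower-semicontinuity / Fatou-type argument: writing $q=2p/(p-1)\in(0,2)$ as in the proof of Proposition~\ref{min1}, one has the elementary pointwise duality $V^p = c_{p}\,\inf\{\,t>0:\dots\,\}$ — more concretely, the functional $V\mapsto\int_\O V^p\,dx$ is, up to a sign, concave and upper semi-continuous in the relevant topology, so that $\int_\O V^p\,dx\ge\limsup_n\int_\O V_n^p\,dx=1$; combined with a scaling remark (if $\int_\O V^p\,dx>1$ one rescales $V$ by a constant $>1$, which only \emph{increases} $V$ and hence, by monotonicity of $F$, does not raise the cost) one may assume $\int_\O V^p\,dx=1$ and $V$ is a genuine admissible potential. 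Here the negativity of $p$ is used twice: it makes $t\mapsto t^p$ decreasing (so the monotonicity of $F$ cooperates with the constraint) and it makes the constraint functional upper semi-continuous rather than lower.

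The final step is to conclude: by hypothesis~(2), $F$ is lower semi-continuous along $R_{V_n}\to R_V$, so $F(V)\le\liminf_n F(V_n)=m$, and since $V$ is admissible, $F(V)=m$ and $V$ is optimal. I would also record that hypothesis~(1), monotonicity, is what licenses the rescaling step above and, more importantly, is what guarantees that the $\gamma$-limit $\mu$ — a priori only a capacitary measure possibly charging sets of zero Lebesgue measure — can be compared from below by honest potentials; in the model functional cases (integral functionals $J(u_V)$ with $f\ge0$, $j(x,\cdot)$ decreasing, or spectral functionals $\Phi(\lambda(V))$ with $\Phi$ increasing and l.s.c.) both hypotheses are readily checked, so the theorem applies.

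\textbf{Main obstacle.} The delicate point is the second step: a priori the $\gamma$-limit of the measures $V_n\,dx$ need not be absolutely continuous with respect to $\L^d$, so one must genuinely use the constraint $\int_\O V_n^p\,dx=1$ with $p<0$ to rule out concentration and to produce an admissible competitor — the monotonicity of $F$ alone handles the ``too big'' direction but not the structure of $\mu$, and making the upper semi-continuity of $V\mapsto\int V^p$ rigorous along $\gamma$-convergence (as opposed to weak-$L^1$ or weak-$*$ convergence of measures) is where the real work lies.
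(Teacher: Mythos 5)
The paper itself gives no proof of this theorem: it only remarks that the result ``can be obtained using finer tools like $\gamma$-convergence for Dirichlet problems'' and defers entirely to \cite{bugeruve}. Your outline follows exactly that strategy --- direct method, $\gamma$-compactness of the class of capacitary measures, identification of the limit, monotonicity of $F$ --- and you correctly isolate the genuinely hard step, namely replacing the $\gamma$-limit $\mu$ (a priori an arbitrary capacitary measure) by an admissible potential. So the architecture is the intended one.

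However, two of your concrete claims go in the wrong direction, and the central lemma is left unproved. First, for $p<0$ the map $s\mapsto s^p$ is \emph{convex} on $(0,+\infty)$ (since $p(p-1)>0$), so $V\mapsto\int_\O V^p\,dx$ is convex and weakly \emph{lower} semicontinuous; the inequality one can hope for is $\int_\O V^p\,dx\le\liminf_n\int_\O V_n^p\,dx=1$, not the reverse --- which is in fact the inequality you want, the relaxed constraint being $\int_\O V^p\,dx\le1$. Second, your rescaling to restore equality is backwards: if $\int_\O V^p\,dx<1$ one takes $tV$ with $t=\bigl(\int_\O V^p\,dx\bigr)^{-1/p}\le1$, so that $t^p\ge1$ restores the constraint while $tV\le V$ gives $F(tV)\le F(V)$ by monotonicity; multiplying by a constant $>1$ \emph{increases} $V$ and, with $F$ increasing in the stated sense, can only raise the cost, so your step as written breaks minimality. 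Third, and most substantively, you cannot speak of $V_n\rightharpoonup V$ ``in the relevant topology'': the constraint gives no bound on $V_n$ in any Lebesgue space (only $V_n^p$ is bounded, in $L^1$). The resolution in \cite{bugeruve} is to extract a weak-$*$ limit $\nu$ of the measures $V_n^p\,dx$, set $V:=v^{1/p}$ where $v$ is the absolutely continuous part of $\nu$ (so that $\int_\O V^p\,dx\le1$), and prove the key lemma $V\,dx\le\mu$ as quadratic forms, using the convexity of $t\mapsto t^{1/p}$ and an Ioffe-type lower semicontinuity argument along the recovery sequences of the $\gamma$-convergence; monotonicity then transfers the lower semicontinuity of $F$ from $\mu$ down to $V$. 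This is precisely the step you flag as the ``main obstacle'' and leave open, so the proposal stands as an accurate roadmap with the decisive lemma missing and two directional errors that need to be repaired.
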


\bigskip

\begin{ack}
The second author wish to thank Gian Maria Dall'Ara for the useful discussions.
\end{ack}


\bigskip
{\small
\begin{minipage}[t]{6.9cm}
Giuseppe Buttazzo\\
Dipartimento di Matematica\\
Universit\`a di Pisa\\
Largo B. Pontecorvo, 5\\
56127 Pisa - ITALY\\
{\tt buttazzo@dm.unipi.it}
\end{minipage}
\begin{minipage}[t]{6.9cm}
Bozhidar Velichkov\\
Scuola Normale Superiore di Pisa\\
Piazza dei Cavalieri, 7\\
56126 Pisa - ITALY\\
{\tt b.velichkov@sns.it}
\end{minipage}}

\end{document}